\newcommand{\rd}{{\rm d}}
\providecommand*{\cupdot}{%
	\mathbin{%
		\mathpalette\@cupdot{}%
	}%
}
\newcommand*{\@cupdot}[2]{%
	\ooalign{%
		$\m@th#1\cup$\cr
		\hidewidth$\m@th#1\cdot$\hidewidth
	}%
}
\newtheorem{condition}{Condition}
\newtheorem{theorem}{Theorem}
\newtheorem{lemma}{Lemma}
\newtheorem{remark}{Remark}
\newtheorem{definition}{Definition}
\newtheorem{corollary}{Corollary}
\def\endpf{{\ \hfill\hbox{\vrule width1.0ex height1.0ex}\parfillskip 0pt
	}}
	\newenvironment{proof}{\noindent{\bf Proof:}}{\endpf}
\begin{document}
	\title{Simple sufficient condition for inadmissibility of Moran's single-split test}
	\author{Royi Jacobovic \thanks{Department of Statistics and Data-Science; The Hebrew University of Jerusalem; Jerusalem 9190501; Israel.
			{\tt royi.jacobovic@mail.huji.ac.il}}    \thanks{Department of Statistics; University of Haifa; Haifa 3498838; Israel.}}
	
	\date{\today}
	\maketitle
	\begin{abstract}
		 Suppose that a statistician observes two independent variates $X_1$ and $X_2$  having densities $f_i(\cdot;\theta)\equiv f_i(\cdot-\theta)\ ,\ i=1,2$ ,  $\theta\in\mathbb{R}$. His purpose is to conduct a test for
		 \begin{equation*}
		 H:\theta=0 \ \ \text{vs.}\ \ K:\theta\in\mathbb{R}\setminus\{0\}
		 \end{equation*}   
		 with a pre-defined significance level $\alpha\in(0,1)$.
		 Moran (1973) suggested a test which is based on  a single split of the data, \textit{i.e.,} to use $X_2$ in order to conduct a one-sided test in the direction of $X_1$. Specifically, if $b_1$ and $b_2$ are the $(1-\alpha)$'th and $\alpha$'th quantiles associated with the distribution of $X_2$ under $H$, then Moran's test has a rejection zone
		 \begin{equation*}
		 (a,\infty)\times(b_1,\infty)\cup(-\infty,a)\times(-\infty,b_2)
		 \end{equation*}
		 where  $a\in\mathbb{R}$ is a design parameter.
		 Motivated by this issue, the current work includes an analysis of a new notion, \textit{regular admissibility} of tests. It turns out that the theory regarding this kind of admissibility leads to a simple sufficient condition on $f_1(\cdot)$ and $f_2(\cdot)$ under which Moran's test is inadmissible. Furthermore, the same approach leads to a formal proof for the conjecture of DiCiccio (2018) addressing that the multi-dimensional version of Moran's test is inadmissible when the observations are $d$-dimensional Gaussians.
	\end{abstract}
	
	\bigskip
	\noindent {\bf Keywords:}  Moran's single-split test. Regular admissibility. Inadmissible test.  Data-splitting. 
	
	\section{Introduction}
	Suppose that a statistician observes a sample of $n\geq2$ independent variates each having the distribution $F(\cdot;\theta)$ ,   $\theta\in\Theta$. His purpose is to conduct a test for
	\begin{equation}\label{hypothesis: general}
	H:\theta=\theta_H \ \ \text{vs.}\ \ K:\theta\in\Theta\setminus\{\theta_H\}
	\end{equation}
	for some $\theta_H\in\Theta$. Once $\Theta$ is large, Moran \cite{Moran1973} suggested to use a test which is based on a single split of the data. Namely, divide the sample into two parts of sizes $1\leq m<n$ and $k=n-m$. The first part is used to estimate in what direction some estimator of
	$\theta$ diverges from $\theta_H$.  Then,  the second part is applied in order to test whether the true value of
	$\theta$ diverges from $\theta_H$ in this direction. The rational of this procedure is based on the expectation that the increase in power resulting from selecting a restricted alternative in the second stage would compensate for the reduction in power which is a consequence of the diminished sample size $k$.  
	
	For example, assume that  the observations are iid one-dimensional Gaussians with mean $\theta\in\mathbb{R}$ and unit variance. Then, Moran's test for $H:\theta=0$ versus $K:\theta\in\mathbb{R}\setminus\{0\}$ with a pre-defined significance level $\alpha\in(0,1)$ is as follows: If the mean of the first sub-sample is positive, then use the mean of the second sub-sample to conduct an $\alpha$-level upper-tailed Z-test. Otherwise, use  the mean of the second sub-sample to conduct an $\alpha$-level lower-tailed Z-test. Intuitively, it seems reasonable to expect that once $\theta$ is far away from the origin, then Moran's test leads to some gain relative to an $\alpha$-level two-sided Z-test. Interestingly, this intuition fails because Moran's suggestion is an unbiased test and it is well-known that the two-sided Z-test is a uniformly most powerful (UMP) unbiased test. In fact, this indicates that for this particular case, Moran's test is either inadmissible or a UMP unbiased test. 
	
	The current research query is whether Moran's test is inadmissible under general assumptions regarding the data distribution? Specifically, assume that the mean of the $i$'th ($i=1,2$) sub-sample has a distribution with a density function $f_i(\cdot;\theta)\equiv f_i(\cdot-\theta)$ , $\theta\in\mathbb{R}$. In this setup,  let $b_1$ and $b_2$ be the $(1-\alpha)$'th and $\alpha$'th quantiles of the distribution whose density is $f_2(\cdot;0)$. Then, Moran's test for $H:\theta=0$ versus $K:\theta\in\mathbb{R}\setminus\{0\}$ has a rejection zone
	\begin{equation}
	(a,\infty)\times(b_1,\infty)\cup(-\infty,a)\times(-\infty,b_2)
	\end{equation}
	where $a\in\mathbb{R}$ is a design parameter. In this work, it is shown that the following condition is sufficient for inadmissibility of Moran's test.
\newpage	
	\begin{condition}\label{cond: inadmissibility}
		$\text{ }$\begin{description}
			\item[(a)] For every $i=1,2$ , $f_i(\cdot)$ is continuous, bounded and positive on $\mathbb{R}$.
			
			\item [(b)] For every $\theta\in(0,\infty)$:
			\begin{equation}
			\exists\lim_{x\to\infty}\frac{f_{2}(x-\theta)}{f_{2}(x)}=\infty\ \ , \ \ \exists\lim_{x\to-\infty}\frac{f_{2}(x-\theta)}{f_{2}(x)}<\infty\,.
			\end{equation}
			
			\item[(c)]For every $\theta\in(-\infty,0)$:
			\begin{equation}
			\exists\lim_{x\to\infty}\frac{f_{2}(x-\theta)}{f_{2}(x)}<\infty\ \ , \ \ \exists\lim_{x\to-\infty}\frac{f_{2}(x-\theta)}{f_{2}(x)}=\infty\,.
			\end{equation}
		\end{description}
	\end{condition}
	In particular, when $f_2(\cdot)$ is even, \textbf{(b)} and \textbf{(c)} may be unified into the condition
	\begin{equation}
	\exists\lim_{x\to\infty}\frac{f_{U_2}(x-\theta)}{f_{U_2}(x)}=\infty\ \ , \ \ \forall\theta\in(0,\infty)\,.
	\end{equation}
	Then, it is easy to apply this condition in order to prove that Moran's test is inadmissible when the data has a Gaussian distribution.
	
	Allegedly, it seems promising to prove  Condition \ref{cond: inadmissibility} by specifying another test and show directly that its power uniformly dominates the power of Moran's test. In practice, as mentioned by Moran \cite{Moran1973}, this approach is not applicable even when the relatively simple Gaussian setup is considered. Another methodology is to pinpoint another test which is known to be UMP in certain class of tests (\textit{e.g.,} the class of unbiased tests). Then, it is left to show that: (1) Moran's test belongs to this class. (2) Moran's test is not UMP in this class. Provided the theory regarding UMP unbiased tests (see, \textit{e.g.,} Sections 4 and 5 in \cite{Lehmann2006}), this approach sounds reasonable only for specific special cases like the Gaussian one. However, it is not clear how this approach should work out when the general case is under consideration. 
	
	An additional contribution of this work is by showing that Moran's test for the multi-dimensional Gaussian setup is inadmissible. More precisely, assume that the observations are  $d$-dimensional Gaussians ($2\leq d<\infty$) with mean $\theta\in\mathbb{R}^d$ and an identity covariance matrix. Then, the purpose of the statistician is to test
	\begin{equation}
	H:\theta=\textbf{0} \ \ \text{vs.} \ \ K:\theta\in\mathbb{R}^d\setminus\{\textbf{0}\}
	\end{equation}
	where $\textbf{0}$ is the zero-vector in $\mathbb{R}^d$. Let $\bar{X}_1$ be the mean of the first sub-sample. Then, Moran \cite{Moran1973} suggested to use the second sub-sample in order to test a simple hypothesis in the direction of $\bar{X}_1$. That is, consider a most powerful (MP) test for the simple hypothesis testing
	\begin{equation}
	H:\theta=\textbf{0} \ \ , \ \ K(\bar{X}_1):\theta=\frac{\bar{X}_1}{\|\bar{X}_1\|}
	\end{equation}
	with a pre-defined significant level $\alpha\in\left(0,1\right)$ where $\|\cdot\|$ is the Euclidean norm in $\mathbb{R}^d$. Standard likelihood-ratio calculations yield that the rejection zone of this test equals to
	\begin{equation}
	\left\{x_1,x_2,\ldots,x_n\in\mathbb{R}^d\ \text{s.t.}\ \bar{x}_1\neq\textbf{0}\ ; \ \frac{\bar{x}_1\cdot\bar{x}_2}{\|\bar{x}_1\|}>D\right\}
	\end{equation}
	where $\cdot$ denotes dot product, $\bar{x}_1\equiv\frac{1}{m}\sum_{i=1}^{m}x_i$, $\bar{x}_2\equiv\frac{1}{k}\sum_{i=m+1}^nx_i$ and $D>0$ is a constant which is determined uniquely by the vector $(\alpha,m,n)$. 
	
	At a first glance, one may suspect that the inadmissibility of Moran's test stems directly from invariance considerations, but some cautions are needed. Let $G$ be the group of orthogonal transformations on $\mathbb{R}^d$ and observe that Moran's test is invariant with respect to $G$. In addition, the (whole) sample mean is a sufficient statistic for the data distribution. Thus, by the so called `sufficiency principle', it makes sense to focus on tests which are determined uniquely by the sample mean. It is well-known that in this class of tests, the UMP $G$-invariant test is the celebrated chi-square test. Thus, it is tempting to believe that the chi-square test is also UMP $G$-invariant test in the class of all tests.  A problem follows since this deduction is by all means non-trivial (see, \textit{e.g.,} the last paragraph before Example 6.3.3 in \cite{Lehmann2006} along with the references therein). Moreover, even if it were possible to justify this deduction for this particular case, it would be left to show that there exists $\theta\in\mathbb{R}\setminus\{\textbf{0}\}$ for which the chi-square test is better than Moran's test. Consequently, inadmissibility of Moran's test is not simply originated from the existing literature. In particular, this fact motivated DiCiccio \cite{DiCiccio2018} to develop an approximation (as $d\to\infty$) of the power function of Moran's test. This approximation was used in order to show that once $d$ is large, the chi-square test outperforms Moran's test (see Remark 2.2 in \cite{DiCiccio2018}). In addition, \cite{DiCiccio2018} also includes a simulation study in support of this claim.

	Motivated by these applications, the theoretical contribution of this work is due to an analysis of a new notion, \textit{regular admissibility} of tests. Section \ref{sec: preliminaries} includes a detailed description of a general model in which the theory regarding regular admissibility is established. Roughly speaking, the main result of Section \ref{sec: regular admissibility} is that any test which satisfies regular admissibility is an MP test in a simple hypothesis testing problem of the original null $H$ versus a Bayes-mixture of simple alternatives under $K$. In Section \ref{sec: inadmissibility}, it is shown how the fundamental Lemma of Neyman and Pearson naturally leads to a sufficient condition for inadmissibility. This condition is applied in order to show that Moran's test is inadmissible. Specifically, Section \ref{sec: univariate} includes an application of the results from the previous sections to the proof of Condition \ref{cond: inadmissibility}.  
	Section \ref{sec: multivariate} is about another application to the multi-dimensional Gaussian case. Finally, Section \ref{sec:conclusion} is a short discussion regarding the way that the current results should be interpreted including several directions for further research. 
	 
	\subsection{Related literature}
	In \cite{Moran1973}, Moran wrote that data-splitting is an approach in which much of natural
	science proceeds, \textit{i.e.,}  one examines a large body of evidence, notices certain empirical features, and then proceeds to test if these are real. Despite this fact, as mentioned by Romano \textit{et al.} \cite{Romanov2019}, it turns out that the statistical literature does not contain much work in this direction. Besides \cite{Moran1973}, there is another work of Cox \cite{Cox1975,Cox1977} which is about an application of data-splitting to an hypothesis testing regarding the mean of a Gaussian population with a sparse alternative.
	
	Naturally, tests which are based on data-splitting are exposed to criticism because they are  not invariant to permutations of the data. In order to relieve this effect, much of the modern research in this area is focused on tests which are based on many splits (unlike \cite{Cox1975,Cox1977} and \cite{Moran1973} which are based on a single split). Namely, from each sub-sample it is possible to extract a p-value. Then, the question has become how to combine these p-values? For several works in this direction see, \textit{e.g.,} \cite{DiCiccio2018,DiCiccio2020,Romanov2019} and the references therein.
	
	Recently, Vovk \textit{et al.} \cite{Vovk2019,Vovk2020} suggested to use e-values instead of p-values. By doing so, he claims that the problem of using a test which is not invariant to permutations of the data becomes much less serious. For more information about e-values, see \cite{Shafer2020}. 

	Another branch of literature which is related to the current work is about complete class theorems for hypothesis testing problems with simple null versus a composite alternative. Several works in this direction are \textit{e.g.,} \cite{Brown1989,Brown1992,Marden1982}. In addition, some relevant surveys regarding admissibility are \cite{Johnstone2019,Rukhin1995}. Importantly, note that there is a literature about admissibility and complete class theorems in the broader context of decision theory. An introduction to this topic is given in Section 2 of \cite{Ferguson2014}. For a more advanced treatment, see, \textit{e.g.,} \cite{Asgharian1998,Kozek} and the references therein. 
	
	\section{Preliminaries} \label{sec: preliminaries}
	Consider a measurable space $\left(\Omega,\mathcal{H}\right)$ and a finite-dimensonal Banach space $(\Theta,\|\cdot\|)$. In particular, let $\textbf{0}$ be the zero vector which is associated with $\Theta$. Then,  it is known that both $\Theta$ and $\Theta_K\equiv\Theta\setminus\{\textbf{0}\}$ are locally compact and $\sigma$-compact. In addition, assume that $X$ is a $\left(\Omega,\mathcal{H}\right)$-measurable function  which receives values in some metric space $\mathcal{X}$. Then, for every $\theta\in\Theta$, let $P_\theta$ be a probability measure on $\left(\mathcal{X},\mathcal{B}(\mathcal{X})\right)$ where $\mathcal{B}$ is the Bor\'el $\sigma$-field of the corresponding topological space. 
	Then, assume that a statistician who does not know the value of $\theta$ observes $X$ which is a realization of $P_\theta$. His intention is to test 
	\begin{equation}\label{eq: hypothesis statement}
	H:\theta=\textbf{0} \ \ \text{vs.} \ \ K:\theta\in \Theta_K
	\end{equation}
	with a pre-defined significance level $\alpha\in(0,1)$. 
	
	Denote 
	\begin{equation}
	\Phi\equiv\left\{\phi:\mathcal{X}\rightarrow[0,1]\ ;\ \phi\text{ is a Bor\'el function}\right\}
	\end{equation}
	which is the set of all  tests and for every $\phi\in\Phi$ define the power function
	\begin{equation}
	\beta_\phi(\theta)\equiv\int_\Theta\phi(x)\rd P_\theta(x)\ \ , \ \ \forall\theta\in\Theta
	\end{equation} 
	and its complement $\gamma_\phi\equiv1-\beta_\phi$. In particular, note that for every $\theta\in\Theta_K$, $\gamma_\phi(\theta)$ equals to the risk which is associated with the test $\phi$ under $\theta$.
	In addition, notice that if $t\in(0,1)$ and $\phi_1,\phi_2\in\Phi$, then 
	\begin{equation}
	\phi_t\equiv t\phi_1+(1-t)\phi_2\in\Phi
	\end{equation}
	and 
	\begin{equation}
	\beta_{\phi_t}(\theta)=t\beta_{\phi_1}(\theta)+(1-t)\beta_{\phi_2}(\theta)\ \ , \ \ \forall\theta\in\Theta\,.
	\end{equation}

	Now, for every  $\Gamma\in\left\{\Theta,\Theta_K\right\}$ let $\mathcal{C}(\Gamma)$ be the set of all functions $f:\Gamma\rightarrow\mathbb{R}$ which are continuous on $\Gamma$. Then, denote the space of continuous functions which vanish in infinity by $\mathcal{C}_0(\Gamma)$. Namely, $f\in\mathcal{C}_0(\Gamma)$ if and only if (iff) $f\in\mathcal{C}(\Gamma)$ and for every $\epsilon>0$, there exists a compact set $K_f(\epsilon)\subseteq \Gamma$ such that
	\begin{equation}
	\{\theta\in \Gamma\ ;\ |f(\theta)|\geq\epsilon\}\subseteq K_f(\epsilon)\,.
	\end{equation}
	Similarly, $\mathcal{C}_c(\Gamma)$ is a notation for the space of all compactly supported functions. Namely, $f\in \mathcal{C}_c(\Gamma)$ iff $f\in\mathcal{C}(\Gamma)$ and there is a compact set $K_f\subseteq \Gamma$ such that
	\begin{equation}
	\{\theta\in \Gamma\ ;\ f(\theta)\neq0\}\subseteq K_f\,.
	\end{equation}
	The following simple lemma will be helpful later on. In order to make its statement, for every function $f:\Theta\rightarrow\mathbb{R}$, let $f\big|_{\Theta_K}$ be the restriction of $f$ to $\Theta_K$.
	
	\begin{lemma}\label{lemma: contradicting function}
		 There exists a  function $f\in\mathcal{C}_0(\Theta)$ such that:
		\begin{enumerate}
			\item $f\big|_{\Theta_K}\in\mathcal{C}_0(\Theta_K)$.
			
			\item $f(\textbf{0})<f(\theta)$ for every $\theta\in\Theta_K$. 
		\end{enumerate}   
	\end{lemma}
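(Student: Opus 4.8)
The plan is to reduce the two "vanishing at infinity" requirements to a single transparent condition and then to exhibit an explicit function. First I would unpack what $f\big|_{\Theta_K}\in\mathcal{C}_0(\Theta_K)$ actually demands. Since $\Theta$ is finite-dimensional, Heine--Borel applies: a subset of $\Theta$ is compact iff it is closed and bounded. A subset of the open set $\Theta_K=\Theta\setminus\{\textbf{0}\}$ is compact (as a subspace, equivalently as a subset of $\Theta$) iff it is closed in $\Theta$, bounded, and bounded away from $\textbf{0}$. Hence $f\big|_{\Theta_K}\in\mathcal{C}_0(\Theta_K)$ is equivalent to the pair of conditions $f(\theta)\to 0$ as $\|\theta\|\to\infty$ and $f(\theta)\to 0$ as $\theta\to\textbf{0}$ within $\Theta_K$. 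Because $f$ is required to be continuous on all of $\Theta$, the second limit equals $f(\textbf{0})$, which forces $f(\textbf{0})=0$; combined with requirement~2, the lemma is equivalent to producing a continuous $f:\Theta\to\mathbb{R}$ with $f(\textbf{0})=0$, $f(\theta)>0$ for all $\theta\in\Theta_K$, and $f(\theta)\to 0$ as $\|\theta\|\to\infty$.

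Then I would simply write one down, for instance $f(\theta)\equiv\|\theta\|e^{-\|\theta\|}$ (any $f(\theta)=g(\|\theta\|)$ with $g:[0,\infty)\to[0,\infty)$ continuous, $g(0)=0$, $g>0$ on $(0,\infty)$, $g(t)\to 0$ as $t\to\infty$ would do). Continuity follows from continuity of the norm and of $t\mapsto te^{-t}$; the facts $f(\textbf{0})=0$, $f>0$ on $\Theta_K$, and $f(\theta)\to 0$ as $\|\theta\|\to\infty$ are immediate. It then remains to verify $f\in\mathcal{C}_0(\Theta)$ and $f\big|_{\Theta_K}\in\mathcal{C}_0(\Theta_K)$ in the required quantified form: for $\epsilon>0$ the set $\{\,\theta\in\Theta:|f(\theta)|\geq\epsilon\,\}$ is closed by continuity and bounded because $f$ decays at infinity, hence compact by Heine--Borel, giving the first; and since $f(\textbf{0})=0<\epsilon$ this same set avoids $\textbf{0}$, so $\{\,\theta\in\Theta_K:|f(\theta)|\geq\epsilon\,\}$ is a compact subset of $\Theta_K$, giving the second.

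I expect no real obstacle here. The only point that needs a little care is the correct interpretation of $\mathcal{C}_0(\Theta_K)$: compactness in the punctured space $\Theta_K$ demands that $f$ decay both at infinity and at the puncture $\textbf{0}$, and it is precisely this observation that pins down $f(\textbf{0})=0$ and shows requirement~2 is consistent with membership in $\mathcal{C}_0(\Theta_K)$. Once that is recognized, the construction is the explicit radial formula above together with Heine--Borel in the finite-dimensional Banach space $\Theta$.
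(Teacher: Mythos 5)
Your proposal is correct and follows essentially the same route as the paper: an explicit radial function vanishing at the origin and at infinity (the paper uses a piecewise formula equal to $\|\theta\|$ near the origin and $e^{-(\|\theta\|-1)}$ outside the unit ball, you use $\|\theta\|e^{-\|\theta\|}$), verified via Heine--Borel on the superlevel sets. Your preliminary observation that membership in $\mathcal{C}_0(\Theta_K)$ forces decay at the puncture and hence $f(\textbf{0})=0$ is a nice clarification that the paper leaves implicit, but the construction and verification are the same in substance.
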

	
	\begin{proof}
		Consider a function
		
		\begin{equation}
		f(\theta)\equiv\begin{cases} 
		\|\theta\| & \|\theta\|\leq 1 \\
		e^{-\left(\|\theta\|-1\right)} & \|\theta\|>1 
		\end{cases}\ \ , \ \ \forall\theta\in\Theta\,.
		\end{equation}
		It is given that $\Theta$ is a finite-dimensional and hence it is enough to check that for every $\epsilon>0$, the set $\{\theta\in\Theta;f(\theta)\geq\epsilon\}$ is closed, bounded and contained in $\Theta_K$. This as well as the other requirements of the lemma are easy to verify.
	\end{proof}
	
	\section{Regular admissibility and $\alpha$-level Bayes tests}\label{sec: regular admissibility}
	Consider the following definitions: 
	\begin{definition}\label{def: regular test}
		Let 
		\begin{equation}
		\Phi(\alpha)\equiv\left\{\phi\in\Phi\ ;\ \beta_\phi(\textbf{0})=\alpha \right\}\,.
		\end{equation}
		Then, $\phi\in\Phi(\alpha)$ is a \textit{regular} test (of level $\alpha$) iff $\gamma_\phi\in \mathcal{C}_0(\Theta)$. In addition, let $\Phi_0(\alpha)$ be the set of all regular tests (of level $\alpha$). 
	\end{definition}

	\begin{definition}\label{def: admissible test}
		Let $\phi\in S\subseteq\Phi(\alpha)$. Then, $\phi$ is an admissible test in $S$ iff for every $\phi'\in S$, the condition 
		\begin{equation}
		\beta_\phi(\theta)\leq\beta_{\phi'}(\theta)\ \ , \ \ \forall\theta\in\Theta_K
		\end{equation}
		implies that 
		\begin{equation}
		\beta_\phi(\theta)=\beta_{\phi'}(\theta)\ \ , \ \ \forall\theta\in\Theta_K\,.
		\end{equation}
		Otherwise, $\phi$ is inadmissible in $S$. Correspondingly, denote the set of all admissible tests in $S$ by $\mathcal{A}_S(\alpha)$. 
		
		In particular, denote the set  $\mathcal{A}_0(\alpha)\equiv\mathcal{A}_{\Phi_0(\alpha)}(\alpha)$ and each test $\phi\in\mathcal{A}_0(\alpha)$ is said to have the property of regular admissibility. 
	\end{definition}
	
	\begin{definition}\label{def: bayes test}
	Let $\Pi$ be the set of all probability measures on $\left(\Theta_K,\mathcal{B}(\Theta_K)\right)$ and consider some $\pi\in\Pi$. In addition, let $S\subseteq\Phi(\alpha)$ be such that for every $\phi'\in S$, $\theta\mapsto\gamma_{\phi'}(\theta)$ is a Bor\'el function on $\Theta_K$. Then, $\phi\in S$ is an $\alpha$-level $\pi$-Bayes test in $S$ iff 
	\begin{equation}\label{eq: bayes definition}
	\int_{\Theta_K}\beta_{\phi}(\theta)\rd\pi(\theta)\geq\int_{\Theta_K}\beta_{\phi'}(\theta)\rd \pi(\theta)\ \ , \ \ \forall\phi'\in S\,.
	\end{equation}

	\end{definition}
	
	\begin{remark}\normalfont
		To the best of the author's knowledge, the concept of a regular test as described in Definition \ref{def: regular test} is new. Generally speaking, this form of regularity is consistent with the intuition that the power of a test should tend to one as the alternative diverges from the null. From a technical perspective,  unlike in other papers (see, \textit{e.g.,} \cite{Brown1989,Brown1992}), power functions of regular tests are not required to satisfy any differentiability condition.   
	\end{remark}

	\begin{remark}\normalfont
		Regular admissibility is a concept which is derived from the standard concept of  admissibility  (see, \textit{e.g.,} Equation (6.18) in \cite{Lehmann2006}) by ignoring all tests which are not regular in the sense of Definition \ref{def: regular test}. For other concepts of admissibility which are related to hypothesis testing, see, \textit{e.g.,} Section 6.7 in \cite{Lehmann2006} and \cite{Brown1992} with the references therein.  
	\end{remark}
	
	\begin{remark}
		\normalfont Albeit the notion which is introduced in Definition \ref{def: bayes test} resembles the standard definition of a Bayes rule (see, \textit{e.g.,} Equation (1.15) in \cite{Ferguson2014}), there is a difference. To see this, take $\pi$ which is a probability measure on $\left(\Theta,\mathcal{B}(\Theta)\right)$ such that $\pi(\Theta_K)=1$. In addition, denote the restriction of $\pi$ to $\Theta_K$ by $\pi\big|_{\Theta_K}$. It is straightforward that a statistician with prior belief which is represented by $\pi$ will reject the null hypothesis for sure. On the other hand, if $\phi$ is an  $\alpha$-level $\pi\big|_{\Theta_K}$-Bayes test, then $\beta_\phi(\textbf{0})=\alpha<1$ which makes it different.  
	\end{remark}

	\begin{theorem}\label{thm: sufficient condition}
		If $\phi\in\mathcal{A}_0(\alpha)$, then there exists $\pi\in\Pi$ for which $\phi$ is an $\alpha$-level $\pi$-Bayes test in $\Phi_0(\alpha)$.  
	\end{theorem}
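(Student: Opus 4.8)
The plan is to regard each regular test through its complementary power function restricted to $\Theta_K$, i.e.\ to work with the map $\phi'\mapsto\gamma_{\phi'}\big|_{\Theta_K}$, whose range $\mathcal{R}:=\{\gamma_{\phi'}\big|_{\Theta_K}:\phi'\in\Phi_0(\alpha)\}$ is a bounded convex subset of the Banach space $\mathcal{C}_0(\Theta_K)$ (convexity of $\Phi_0(\alpha)$, hence of $\mathcal{R}$, is immediate: a convex combination of level-$\alpha$ tests is level $\alpha$, and a convex combination of functions in $\mathcal{C}_0(\Theta)$ is again in $\mathcal{C}_0(\Theta)$). The definition of regular admissibility says precisely that $\gamma_\phi\big|_{\Theta_K}$ is a minimal element of $\mathcal{R}$ for the pointwise order, equivalently that the convex set $B:=\mathcal{R}+\mathcal{C}_0^{+}(\Theta_K)$ meets $\gamma_\phi\big|_{\Theta_K}-\mathcal{C}_0^{+}(\Theta_K)$ only at $\gamma_\phi\big|_{\Theta_K}$ itself. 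Since the dual of $\mathcal{C}_0(\Theta_K)$ is the space of finite signed Radon measures on $\Theta_K$ (using that $\Theta_K$ is locally compact Hausdorff), producing the desired prior amounts to ``scalarizing'' this minimality: to find a nonzero positive Radon measure $\mu$ on $\Theta_K$ with $\int\gamma_\phi\,\rd\mu\le\int\gamma_{\phi'}\,\rd\mu$ for every $\phi'\in\Phi_0(\alpha)$, after which $\pi:=\mu/\mu(\Theta_K)\in\Pi$ does the job.

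The separation step cannot be carried out directly in $\mathcal{C}_0(\Theta_K)$, because the positive cone there has empty norm-interior, so I would localize. Fix an exhaustion $C_1\subseteq C_2\subseteq\cdots$ of $\Theta_K$ by compacta with $C_n$ contained in the interior of $C_{n+1}$. On each $C_n$ the positive cone of $\mathcal{C}(C_n)$ does have nonempty interior (the constant function $1$ is interior to it), so a geometric Hahn--Banach argument applied to the convex set $\{\gamma_{\phi'}\big|_{C_n}:\phi'\in\Phi_0(\alpha),\ \gamma_{\phi'}\le\gamma_\phi\ \text{on}\ \Theta_K\setminus C_n\}$ --- of which $\gamma_\phi\big|_{C_n}$ is a minimal element by regular admissibility --- and to its translate by $-\mathrm{int}\,\mathcal{C}^{+}(C_n)$ yields a nonzero positive measure $\mu_n$ on $C_n$, normalized to $\mu_n(C_n)=1$, such that $\int\gamma_\phi\,\rd\mu_n\le\int\gamma_{\phi'}\,\rd\mu_n$ for every $\phi'\in\Phi_0(\alpha)$ whose risk does not exceed that of $\phi$ off $C_n$. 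Viewing each $\mu_n$ as an element of the unit ball of the dual of $\mathcal{C}_0(\Theta)$ and invoking Banach--Alaoglu, pass to a weak$^{*}$ convergent subnet with limit $\mu\ge 0$, $\mu(\Theta)\le 1$. Here the contradicting function $f$ of Lemma~\ref{lemma: contradicting function} enters: since $f\in\mathcal{C}_0(\Theta)$, $f\big|_{\Theta_K}\in\mathcal{C}_0(\Theta_K)$ and $f(\textbf{0})<f(\theta)$ on $\Theta_K$, testing the local optimality inequalities against $f$ (and against $\phi$-like competitors, using regularity $\gamma_{\phi'}\in\mathcal{C}_0(\Theta)$) forces the limiting mass to stay bounded away from both infinity and the null $\textbf{0}$; hence $\mu\ne 0$ and $\mu$ is carried by $\Theta_K$, and normalization gives $\pi\in\Pi$. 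Finally, since $\gamma_\phi$ and every $\gamma_{\phi'}$ lie in $\mathcal{C}_0(\Theta)$, the inequalities pass to the weak$^{*}$ limit, giving $\int\gamma_\phi\,\rd\pi\le\int\gamma_{\phi'}\,\rd\pi$ for all $\phi'\in\Phi_0(\alpha)$, which is the claim.

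The heart of the matter, and the step I expect to require the most care, is this last limiting argument: each $\mu_n$ is only known to be Bayes against the subclass of tests that do not beat $\phi$ outside $C_n$, and as $C_n\uparrow\Theta_K$ this subclass still excludes tests whose improvement over $\phi$ is concentrated in an arbitrarily small punctured neighbourhood of $\textbf{0}$ --- precisely the region where every level-$\alpha$ test has risk pinned near $1-\alpha$. Controlling this, together with ruling out escape of mass to infinity, is exactly what the regularity hypothesis ($\gamma_\phi\in\mathcal{C}_0(\Theta)$) and the function supplied by Lemma~\ref{lemma: contradicting function} are designed to handle; showing that these two ingredients suffice to make $\mu$ both nontrivial and supported on $\Theta_K$, and that no improving competitor survives in the limit, is the crux. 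The remaining verifications --- convexity and the elementary closure properties of $\Phi_0(\alpha)$, Borel measurability of the risk maps, and the bookkeeping inside the Hahn--Banach step --- are routine.
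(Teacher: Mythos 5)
Your overall strategy---scalarize minimality of $\gamma_\phi\big|_{\Theta_K}$ via a separating positive functional and represent it by a measure---is the right one, but the localization-plus-Banach--Alaoglu route you chose to implement it leaves a gap at exactly the point you flag as the crux, and that gap is not cosmetic. Three things go wrong in the limit. First, for a fixed competitor $\phi'\in\Phi_0(\alpha)$ your inequality $\int\gamma_\phi\,\rd\mu_n\le\int\gamma_{\phi'}\,\rd\mu_n$ holds only when $\gamma_{\phi'}\le\gamma_\phi$ on $\Theta_K\setminus C_n$; since for a generic $\phi'$ neither risk dominates the other off any compactum, such a $\phi'$ is never eligible at any finite stage, and there is no evident way to approximate it by eligible competitors (tests are functions of $x$, so you cannot patch $\phi'$ with $\phi$ region-by-region in $\theta$). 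The limiting Bayes inequality is therefore established only for a proper subclass of $\Phi_0(\alpha)$. Second, nothing prevents the normalized $\mu_n$ from concentrating near the boundary of $C_n$; since $\gamma_\phi$ and every $\gamma_{\phi'}$ lie in $\mathcal{C}_0(\Theta)$, all the integrals then tend to $0$, the inequalities become vacuous, and the weak$^*$ limit is the zero measure. Third, the mechanism by which Lemma~\ref{lemma: contradicting function} is supposed to exclude these degeneracies is not specified and is not clear: the function $f$ it supplies is not the risk function of any test, so it cannot be ``tested against'' the only inequalities you have, which compare risk functions of tests.

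For comparison, the paper sidesteps the empty-interior problem not by localizing but by separating a different pair of sets: the collection $\mathcal{S}=\{\gamma_{\phi'}\big|_{\Theta_K}\}$ is separated from $\Delta(\phi)\setminus\{\gamma_\phi\big|_{\Theta_K}\}$, where $\Delta(\phi)$ is the set of \emph{all} continuous functions pointwise below $\gamma_\phi$, inside $\mathcal{C}(\Theta_K)$ with the sup norm. Because $\gamma_\phi\ge0$, the constant function $-1$ is a norm-interior point of that lower set, so geometric Hahn--Banach applies in one stroke and yields a single nonzero positive bounded functional $\mathcal{F}$ valid against every $\phi'\in\Phi_0(\alpha)$ simultaneously---there is no exhaustion and no loss of competitors. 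The functional is then represented on $\mathcal{C}_c(\Theta_K)$ by a Radon measure $\mu$, shown finite via Urysohn functions, and extended to $\mathcal{C}_0(\Theta_K)$ by density; a second representation $\nu$ on all of $\Theta$ (for the induced functional $f\mapsto\mathcal{F}\{f\big|_{\Theta_K}\}$) is needed because $\gamma_{\phi'}\big|_{\Theta_K}$ need not lie in $\mathcal{C}_0(\Theta_K)$ even when $\gamma_{\phi'}\in\mathcal{C}_0(\Theta)$, and it is at this stage---comparing the two representations on the contradicting function $f$---that Lemma~\ref{lemma: contradicting function} is used, to rule out $\nu$ being a Dirac mass at $\textbf{0}$. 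If you want to salvage your version, the most direct fix is to replace your cone-translate $B=\mathcal{R}+\mathcal{C}_0^{+}(\Theta_K)$ by the full lower set $\Delta(\phi)$ and work in the sup-normed space of bounded continuous functions, which restores a nonempty interior and removes the need for the exhaustion altogether.
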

	\begin{proof}
		Denote 
		\begin{equation}
		\Delta(\phi)\equiv\left\{f\in \mathcal{C}(\Theta_K)\ ;\ f(\theta)\leq\gamma_\phi(\theta) \ ,  \ \forall\theta\in\Theta_K\right\}
		\end{equation}
		and observe that 
		\begin{equation}
		\mathcal{S}\equiv\left\{\gamma_{\phi'}\big|_{\Theta_K}\ ;\ \phi'\in\Phi_0(\alpha)\right\}\subseteq\mathcal{C}(\Theta_K)\,.
		\end{equation}
		 It is given that $\phi\in\mathcal{A}_0(\alpha)$ and hence 
		\begin{equation}
		\left(\Delta(\phi)\setminus\{\gamma_\phi\big|_{\Theta_K}\}\right)\cap\mathcal{S}=\emptyset\,.
		\end{equation} 
		In fact, it can be verified that both $\mathcal{S}$ and $\Delta(\phi)\setminus\{\gamma_\phi\big|_{\Theta_K}\}$ are convex sets. Especially, since $\gamma_\phi(\theta)\geq 0$ for every $\theta\in\Theta_K$, then 
		\begin{equation}
		f(\theta)=-1\ \ , \ \ \forall\theta\in\Theta_K
		\end{equation}
		is an interior point of $\Delta(\phi)\setminus\{\gamma_\phi\big|_{\Theta_K}\}$ with respect to the norm
		\begin{equation}
		\|f\|_{\infty|\Theta_K}\equiv\sup_{\theta\in\Theta_K}|f(\theta)|\ \ , \ \ \forall f\in\mathcal{C}(\Theta_K)\,.
		\end{equation}
		Therefore, the basic separation theorem (see Theorem 3.5.13 and Corollary 3.5.14(a) in \cite{Ash2014}) implies that there exists a non-zero continuous linear functional $\mathcal{F}\{\cdot\}$ on $\mathcal{C}(\Theta_K)$ such that
		
		\begin{equation}\label{eq: supporting hyperplane}
		\mathcal{F}\{f\}\leq\mathcal{F}\{\gamma\}\ \ , \ \ \forall f\in\Delta(\phi)\setminus\{\gamma_\phi\big|_{\Theta_K}\}\ , \ \gamma\in\mathcal{S}\,.
		\end{equation} 
		Now, consider some $f\in\mathcal{C}(\Theta_K)$ which is nonpositive and not identically zero. Then,  $f+\gamma_\phi\big|_{\Theta_K}\in\Delta(\phi)\setminus\{\gamma_\phi\big|_{\Theta_K}\}$ and hence
		\begin{equation}
		\mathcal{F}\{f\}+\mathcal{F}\{\gamma_\phi\big|_{\Theta_K}\}=\mathcal{F}\{f+\gamma_\phi\big|_{\Theta_K}\}\leq\mathcal{F}\{\gamma_\phi\big|_{\Theta_K}\}\,.
		\end{equation}
		Moreover, $\mathcal{F}\{\cdot\}$ is continuous and hence it is also bounded, \textit{i.e.,} there exists a constant $M\in(0,\infty)$ such that
		\begin{equation}\label{eq: bounded functional}
		\big|\mathcal{F}\{f\}\big|\leq M\|f\|_{\infty|\Theta_K}\ \ , \ \ \forall f\in \mathcal{C}(\Theta_K)\,.
		\end{equation} 
		Therefore, since $\|\gamma_\phi\big|_{\Theta_K}\|_{\infty|\Theta_K}\leq1$, deduce that $|\mathcal{F}\{\gamma_\phi\big|_{\Theta_K}\}|<\infty$ and hence $\mathcal{F}\{\cdot\}$ is positive, \textit{i.e.,}
		\begin{equation}
		\mathcal{F}\{f\}\geq0\ \ , \ \ \forall f\in\mathcal{C}(\Theta_K)\ \  \text{s.t.}\ \  f(\theta)\geq0\ , \ \forall \theta\in\Theta_K\,.
		\end{equation}
		$(\Theta_K,\|\cdot\|)$ is a special case of locally compact $\sigma$-compact Hausdorff space. 
		Thus, Riesz-Markov-Kakutani representation theorem (see Section 2.14  in \cite{Rudin}) states that there exists a  regular Bor\'el measure $\mu$ on $\Theta_K$ such that
		
		\begin{equation}\label{eq: riesz representation1}
		\mathcal{F}\{f\}=\int_{\Theta_K}f(\theta)\rd\mu(\theta)\ \ , \ \ \forall f\in\mathcal{C}_c(\Theta_K)\,.
		\end{equation}
		Since $\mathcal{F}\{\cdot\}$ is non-zero functional, then $\mu$ is a non-zero measure. In addition, 
		since $\Theta_K$ is $\sigma$-compact, then there exists a sequence of non-empty compact sets $K_1\subseteq K_2\subseteq\ldots\subseteq\Theta_K$ such that $\cup_{n=1}^\infty K_n=\Theta_K$.
		Therefore, since $\Theta_K$ is an open set, then  Urysohn's lemma (see Section 2.12 in \cite{Rudin}) implies that for every $n\geq1$ there exists a function $g_n\in \mathcal{C}_c(\Theta_K)$ such that 
		\begin{enumerate}
			\item $0\leq g_n(\theta)\leq1$ for every $\theta\in\Theta_K$.
			
			\item $g_n(\theta)=1$ for every $\theta\in K_n$.
		\end{enumerate}
		Thus, observe that for every $n\geq1$
		\begin{align}
		\mu\left(K_n\right)&=\int_{K_n}\rd\mu(\theta)\\&\leq\int_{\Theta}g_n(\theta)\rd\mu(\theta)=\mathcal{F}\{g_n\}\leq M\|g_n\|_{\infty|\Theta_K}=M<\infty\,.\nonumber
		\end{align}
		This means that by taking $n\to\infty$, continuity of measure (from below) implies that $\mu\left(\Theta_K\right)$ is finite.  
		
		Let $f$ be some arbitrary function in $\mathcal{C}_0(\Theta_K)$. It is well-known that $\mathcal{C}_c(\Theta_K)$ is dense in $\mathcal{C}_0(\Theta_K)$ with respect to the norm $\|\cdot\|_{\infty|\Theta_K}$. This means that there exists a sequence $(h_n)_{n\geq1}\subseteq\mathcal{C}_c(\Theta_K)$ such that $\|f-h_n\|_{\infty|\Theta_K}\rightarrow0$ as $n\to\infty$. Therefore, since $\mathcal{F}\{\cdot\}$ is continuous, then $\mathcal{F}\{h_n\}\rightarrow\mathcal{F}\{f\}$ as $n\to\infty$. In addition, $\|f\|_{\infty|\Theta_K}<\infty$ and hence 
		\begin{equation}
		\sup_{n\geq1}\|h_n\|_{\infty|\Theta_K}=\sup_{n\geq1}\sup_{\theta\in\Theta_K}|h_n(\theta)|<\infty\,.
		\end{equation} 
		At the same time, the above-mentioned uniform convergence implies pointwise convergence $h_n\rightarrow f$ as $n\to\infty$ on $\Theta_K$. Thus, recalling that $\mu$ is a finite measure, then bounded convergence theorem yields that
		\begin{equation}
		\mathcal{F}\{h_n\}=\int_{\Theta_K} h_n(\theta)\rd\mu(\theta)\xrightarrow{n\to\infty}\int_{\Theta_K} f(\theta)\rd\mu(\theta)\,.
		\end{equation}   Hence, by the uniqueness of the limit and the generality of $f$, deduce that
		\begin{equation}\label{eq: representation1}
		\mathcal{F}\{f\}=\int_{\Theta_K} f(\theta)\rd\mu(\theta)\ \ , \ \ \forall f\in\mathcal{C}_0(\Theta_K)\,.
		\end{equation}
		Now, define a new non-zero linear functional
		\begin{equation}
		\mathcal{G}\{f\}\equiv\mathcal{F}\left\{f\big|_{\Theta_K}\right\}
		\end{equation}
		on a normed vector-space $\left(\mathcal{C}(\Theta),\|\cdot\|_{\infty|\Theta}\right)$
		where
		\begin{equation}
		\|f\|_{\infty|\Theta}\equiv\sup_{\theta\in\Theta}|f(\theta)|\ \ , \ \ \forall f\in\mathcal{C}(\Theta)\,.
		\end{equation}
		Observe that for every $f\in\mathcal{C}(\Theta)$,
		\begin{equation}
		|\mathcal{G}\{f\}|=|\mathcal{F}\{f\big|_{\Theta_K}\}|\leq M\sup_{\theta\in\Theta_K}|f(\theta)|\leq M\|f\|_{\infty|\Theta}
		\end{equation} 
		which means that $\mathcal{G}\{\cdot\}$ is bounded and hence also continuous.
		Thus, as it was shown for $\mathcal{F}\{\cdot\}$, it is possible to show that $\mathcal{G}\{\cdot\}$ is  positive. Consequently,  since $(\Theta,\|\cdot\|)$ is a special case of a locally compact $\sigma$-compact Hausdorff space, Riesz-Markov-Kakutani  representation theorem can be applied once again. This time in order to show  that there exists a regular Bor\'el measure $\nu$ on $\Theta$ such that
		\begin{equation}\label{eq: riesz representation}
		\mathcal{G}\{f\}=\int_{\Theta}f(\theta)\rd\nu(\theta)\ \ , \ \ \forall f\in\mathcal{C}_c(\Theta)\,.
		\end{equation}
		In particular, $\nu$ is non-zero and finite by analogue arguments to those which were used in order to justify these properties of $\mu$. Furthermore, analogue arguments to those which lead to \eqref{eq: representation1}, imply that  
		\begin{equation}\label{eq: representation}
		\mathcal{G}\{f\}=\int_\Theta f(\theta)\rd\nu(\theta)\ \ , \ \ \forall f\in\mathcal{C}_0(\Theta)\,.
		\end{equation}	
		Now, assume by contradiction that $\nu$ is a Dirac measure on $\{\textbf{0}\}$. Notice that Lemma \ref{lemma: contradicting function} implies that there exists a bounded function $f\in\mathcal{C}_0(\Theta)$ such that:
		\begin{enumerate}
			\item $f\big|_{\Theta_K}\in\mathcal{C}_0(\Theta_K)$.
			
			\item  $f(\textbf{0})<f(\theta)$ for every $\theta\in\Theta_K$.
		\end{enumerate}  
		Then, a contradiction follows from
		\begin{align}
		f(\textbf{0})&=\mathcal{G}\{f\}=\mathcal{F}\{f\big|_{\Theta_K}\}=\int_{\Theta_K} f\big|_{\Theta_K}(\theta)\rd\mu(\theta)>f(\textbf{0})\,.\nonumber
		\end{align}	
		Fix some $\phi'\in\Phi_0(\alpha)$ and for every $n\geq1$, let
		\begin{equation}
		f_n(\theta)\equiv\gamma_\phi(\theta)-\frac{1}{n}\ \ , \ \ \forall\theta\in\Theta\,.
		\end{equation} 
		Clearly, for every $n\geq1$, $f_n\big|_{\Theta_K}\in\Delta(\phi)\setminus\{\gamma_\phi\}\subseteq\mathcal{C}(\Theta_K)$ and observe that
		\begin{equation}
		\|\gamma_\phi\big|_{\Theta_K}-f_n\big|_{\Theta_K}\|_{\infty|\Theta_K}\rightarrow0\ \ \text{as} \ \ n\to\infty\,.
		\end{equation}  Hence, since $\mathcal{F}\{\cdot\}$ is a continuous functional on $\mathcal{C}(\Theta_K)$, then $\mathcal{F}\{f_n\big|_{\Theta_K}\}\rightarrow\mathcal{F}\{\gamma_\phi\big|_{\Theta_K}\}$ as $n\to\infty$. Due to \eqref{eq: supporting hyperplane}, deduce that $\mathcal{F}\{f_n\big|_{\Theta_K}\}\leq\mathcal{F}\{\gamma_{\phi'}\big|_{\Theta_K}\}$ for every $n\geq1$. Thus, taking the limit $n\to\infty$ yields that $\mathcal{F}\{\gamma_\phi\big|_{\Theta_K}\}\leq\mathcal{F}\{\gamma_{\phi'}\big|_{\Theta_K}\}$ and hence
		\begin{align}
		\int_\Theta\gamma_\phi(\theta)\rd\nu(\theta)&=\mathcal{G}\{\gamma_\phi\}=\mathcal{F}\{\gamma_\phi\big|_{\Theta_K}\}\\&\leq\mathcal{F}\{\gamma_{\phi'}\big|_{\Theta_K}\}=\mathcal{G}\{\gamma_{\phi'}\}=\int_{\Theta}\gamma_\phi(\theta)\rd\nu(\theta)\,.\nonumber
		\end{align} 
		Consequently, the generality of $\phi'$ implies that 
		\begin{equation}
		\int_\Theta\beta_\phi(\theta)\rd\nu(\theta)\geq\int_\Theta\beta_{\phi'}(\theta)\rd\nu(\theta)\ \ , \ \ \forall\phi'\in\Phi_0(\alpha)\,.
		\end{equation}
		Finally, since $\nu$ is a non-zero finite regular Bor\'el measure on $\Theta$ which is not concentrated on $\{\textbf{0}\}$ and $\beta_\phi(\textbf{0})=\beta_{\phi'}(\textbf{0})=\alpha$, then the result follows by setting $\pi$ to be the restriction of $\nu$ to $\Theta_K$ with a proper normalization.
	\end{proof}
	
	\begin{remark}
		\normalfont In fact, the measure $\pi$ which is defined in the statement of Theorem \ref{thm: sufficient condition} is regular. That is, $\pi$ satisfies the regularity conditions which appear in the statement of Riesz-Markov-Kakutani representation theorem (see Section 2.14  in \cite{Rudin}). Since the regularity of $\pi$ is not important for the  analysis to follow in the next sections, this fact was omitted from the statement of Theorem \ref{thm: sufficient condition}.
	\end{remark}
	\begin{remark}\normalfont
		Regarding the proof of Theorem \ref{thm: sufficient condition}, one may be wondering why not to take $\pi$ to be $\mu$? To answer this question, for simplicity assume that $\Theta=\mathbb{R}$ which means that $\Theta_K=\mathbb{R}\setminus\{0\}$. In addition, consider a test $\phi\in\Phi_0(\alpha)$ such that $\theta\mapsto\gamma_\phi(\theta)$ is increasing   on $(-\infty,0)$ and decreasing on $(0,\infty)$. In such a case, for every $0<\epsilon<1-\alpha$, the set $\{\theta\in\Theta_K;\gamma_\phi(\theta)\geq\epsilon\}$ is not closed and hence not compact. This means that $\gamma_\phi\notin\mathcal{C}_0(\Theta_K)$ and consequently the proof of Theorem \ref{thm: sufficient condition} would be incorrect if one took $\pi$ which equals to $\mu$.   
	\end{remark}

	\section{Inadmissibility of regular tests}\label{sec: inadmissibility}
	Assume that $(B,\theta)\mapsto P_\theta(B)$ is a  transition kernel on $\Theta_K\times\mathcal{B}(\mathcal{X})$. Specifically, this means that for every $B\in\mathcal{B}(\mathcal{X})$, $\theta\mapsto P_\theta(B)$ is a Bor\'el function on $\Theta_K$ and for every $\theta\in\Theta$, $P_\theta$ is a probability measure on $\left(\mathcal{X},\mathcal{B}\left(\mathcal{X}\right)\right)$. In addition, assume that there exists a $\sigma$-finite measure $\lambda$ on $\mathcal{X}$ such that 
	\begin{equation}\label{eq: absolutely continuous}
	P_\theta\ll\lambda \ \ , \ \ \forall \theta\in\Theta
	\end{equation}
	where $\ll$ is a symbol for absolute continuity. Correspondingly, for every $\theta\in\Theta$  let $f_\theta$ be the nonnegative version of the Radon-Nikodym derivative of $P_\theta$ with respect to $\lambda$. Then, for every probability measure $\pi\in\Pi$,  define a probability measure $\mathcal{P}_\pi$ on $\left(\mathcal{X},\mathcal{B}(\mathcal{X})\right)$ such that
	
	\begin{equation}
	\mathcal{P}_\pi(B)=\int_\Theta P_\theta(B)\rd\pi(\theta)\ \ , \ \ \forall B\in\mathcal{B}(\mathcal{X})\,.
	\end{equation}
	In particular,  $\mathcal{P}_\pi$ may be considered as a Bayes mixture of simple alternatives under $K$ (this terminology is taken from Birnbaum \cite{Birnbaum1954})
	and \eqref{eq: absolutely continuous} implies that $\mathcal{P}_\pi\ll\lambda$. 
	
	Definition \ref{def: bayes test} actually asserts that an $\alpha$-level $\pi$-Bayes test in $\Phi(\alpha)$ is an MP test of the simple hypothesis testing problem: 
	\begin{equation}\label{eq: Bayes hypothesis statement}
	H:X\sim P_{\textbf{0}} \ \ \text{vs.} \ \ K_\pi:X\sim \mathcal{P}_\pi\,.
	\end{equation}
	Therefore, the fundamental lemma of Neyman and Pearson (see Theorem 3.2.1(iii) in \cite{Lehmann2006}) might be carried out under the assumption that there is no trivial test for \eqref{eq: Bayes hypothesis statement}. Specifically, if $\phi\in\Phi(\alpha)$ is an $\alpha$-level $\pi$-Bayes test in $\Phi(\alpha)$, then there exists a constant $C\equiv C_\pi\in\mathbb{R}$ such that 
	\begin{equation}\label{eq:N-P necessary condition}
	\phi(x)=\begin{dcases}
	1 & Cf_{\textbf{0}}(x)<\int_{\Theta_K}f_\theta(x)\rd\pi(\theta) \\
	0 & Cf_{\textbf{0}}(x)>\int_{\Theta_K}f_\theta(x)\rd\pi(\theta)
	\end{dcases}\ \ \ , \ \ \ \lambda\text{-a.s.}
	\end{equation}
	Furthermore,  define $\phi^*_\pi$ to be the $\alpha$-level $\pi$-Bayes test which is determined as the solution of the optimization in Section 6 of \cite{Jacobovic2020}. In particular, once $f_{\theta}(x)>0$, for every $\theta\in\Theta$ and $x\in\mathcal{X}$, then $C$ which appears in \eqref{eq:N-P necessary condition} is positive. Moreover, in such a case,  there exist $C_\pi^*\in(0,\infty)$ and $\tau^*_\pi\in[0,1]$ such that
	\begin{equation}
	\phi_\pi^*(x)=\textbf{1}_{\{L_\pi(x)>C_\pi^*\}}+\tau_\pi\textbf{1}_{\{L_\pi(x)=C_\pi^*\}}\ \ ,  \ \ \forall x\in\mathbb{R}^d
	\end{equation}
	where
	\begin{equation}
	L_\pi(x)\equiv\int_{\mathbb{R}^d\setminus\{\textbf{0}\}}\frac{f_\theta(x)}{f_{\textbf{0}}(x)}\rd\pi(\theta)\,\ \ , \ \ \forall x\in\mathbb{R}^d\,.
	\end{equation}
	
	\newpage
	The following corollary includes a sufficient condition for inadmissibility of a regular test. It stems immediately from the above-mentioned discussion with an application of Theorem \ref{thm: sufficient condition}. 
	
	\begin{corollary}\label{cor: inadmissible}
		Let $\phi\in\Phi_0(\alpha)$ and assume that there are sets $\Pi_1,\Pi_2\subseteq\Pi$ such that:
		\begin{enumerate}
			\item $\Pi=\Pi_1\cup \Pi_2$.
			
			\item 
			$\Pi_1\subseteq\left\{\pi\in\Pi\ \ ; \ \phi^*_\pi\in\Phi_0(\alpha)\right\}$.
			
			\item For every $\pi\in\Pi_2$, there exists $\phi'_\pi\in\Phi_0(\alpha)$ such that
			\begin{equation}
			\int_{\Theta_K}\beta_{\phi}(\theta)\rd\pi(\theta)<\int_{\Theta_K}\beta_{\phi'_\pi}(\theta)\rd \pi(\theta)\,.
			\end{equation}
			
		\end{enumerate} 
		If for every $\pi\in\Pi_1$, there is no trivial test for \eqref{eq: Bayes hypothesis statement} and there is no $C\in\mathbb{R}$ for which \eqref{eq:N-P necessary condition} is  satisfied,
		then $\phi\in\Phi_0(\alpha)\setminus\mathcal{A}_0(\alpha)$.
	\end{corollary}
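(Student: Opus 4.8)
The plan is to argue by contradiction: assuming $\phi\in\mathcal{A}_0(\alpha)$ is regularly admissible, I will reach a contradiction. The first step is to invoke Theorem \ref{thm: sufficient condition}, which supplies a prior $\pi\in\Pi$ for which $\phi$ is an $\alpha$-level $\pi$-Bayes test in $\Phi_0(\alpha)$; this is legitimate because, under the transition-kernel hypothesis of Section \ref{sec: inadmissibility}, $\theta\mapsto\gamma_{\phi'}(\theta)$ is Borel on $\Theta_K$ for every $\phi'\in\Phi(\alpha)$, so $S=\Phi_0(\alpha)$ is a legitimate choice of class in Definition \ref{def: bayes test}. Since $\Pi=\Pi_1\cup\Pi_2$, this $\pi$ lies in at least one of the two sets, and I would treat the two alternatives separately.

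The case $\pi\in\Pi_2$ is disposed of immediately: hypothesis 3 produces $\phi'_\pi\in\Phi_0(\alpha)$ with $\int_{\Theta_K}\beta_{\phi}\,\rd\pi<\int_{\Theta_K}\beta_{\phi'_\pi}\,\rd\pi$, which flatly contradicts the inequality \eqref{eq: bayes definition} defining an $\alpha$-level $\pi$-Bayes test in $\Phi_0(\alpha)$.

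The case $\pi\in\Pi_1$ is the heart of the argument. By hypothesis 2, $\phi^*_\pi\in\Phi_0(\alpha)$. Recall from the discussion preceding the corollary that $\phi^*_\pi$ is an $\alpha$-level $\pi$-Bayes test in the \emph{full} class $\Phi(\alpha)$, hence an MP test for \eqref{eq: Bayes hypothesis statement}; in particular $\int_{\Theta_K}\beta_{\phi^*_\pi}\,\rd\pi\ge\int_{\Theta_K}\beta_{\phi}\,\rd\pi$. On the other hand, $\phi$ being $\pi$-Bayes in $\Phi_0(\alpha)$ together with $\phi^*_\pi\in\Phi_0(\alpha)$ gives the reverse inequality $\int_{\Theta_K}\beta_{\phi}\,\rd\pi\ge\int_{\Theta_K}\beta_{\phi^*_\pi}\,\rd\pi$. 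Equality follows, so $\phi$ also attains the global Bayes value, i.e.\ $\phi$ is an $\alpha$-level $\pi$-Bayes test in $\Phi(\alpha)$ --- equivalently an MP test of size exactly $\alpha$ (recall $\beta_\phi(\textbf{0})=\alpha$) for the simple problem \eqref{eq: Bayes hypothesis statement}. Since by hypothesis there is no trivial test for \eqref{eq: Bayes hypothesis statement}, the necessity part of the fundamental lemma of Neyman and Pearson (Theorem 3.2.1(iii) in \cite{Lehmann2006}) then yields a constant $C\in\mathbb{R}$ for which $\phi$ has the form \eqref{eq:N-P necessary condition} $\lambda$-a.s.\ --- contradicting the standing assumption that no such $C$ exists. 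As both cases are impossible, $\phi\notin\mathcal{A}_0(\alpha)$, i.e.\ $\phi\in\Phi_0(\alpha)\setminus\mathcal{A}_0(\alpha)$.

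The step I expect to demand the most care is the ``upgrade'' inside the case $\pi\in\Pi_1$: one must verify that Bayes-optimality merely \emph{within} the restricted regular class $\Phi_0(\alpha)$ forces Bayes-optimality within the unrestricted class $\Phi(\alpha)$. This is precisely where hypothesis 2 --- regularity of $\phi^*_\pi$ --- does the work, by placing the globally optimal test inside $\Phi_0(\alpha)$ so that the two optima must coincide. Beyond this, the only points needing attention are invoking the \emph{necessity} (rather than the sufficiency) direction of the Neyman--Pearson lemma and keeping track of the exact role of the no-trivial-test hypothesis, both of which are routine.
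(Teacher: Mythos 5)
Your proof is correct and follows precisely the route the paper intends: the paper gives no explicit proof, stating only that the corollary ``stems immediately'' from the Neyman--Pearson discussion together with Theorem \ref{thm: sufficient condition}, and your argument (obtain $\pi$ from Theorem \ref{thm: sufficient condition}, split on $\pi\in\Pi_1$ versus $\pi\in\Pi_2$, and in the $\Pi_1$ case upgrade Bayes-optimality in $\Phi_0(\alpha)$ to Bayes-optimality in $\Phi(\alpha)$ via $\phi^*_\pi\in\Phi_0(\alpha)$ so that the necessity part of the Neyman--Pearson lemma applies) is exactly the implicit argument spelled out.
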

	
	\begin{remark}
		\normalfont   
		Notice that $\Phi_0(\alpha)\subseteq\Phi(\alpha)$. Therefore, a statement that $\phi\in\Phi_0(\alpha)$ is inadmissible in $\Phi_0(\alpha)$ is more informative than a statement which asserts that it is inadmissible in $\Phi(\alpha)$.
	
	\end{remark}
	
	\begin{remark}
		\normalfont It is possible to have $\pi$ for which $\phi_\pi^*\notin\Phi_0(\alpha)$. For example, consider the special case where
		\begin{enumerate}
			\item $\Theta=\mathcal{X}=\mathbb{R}$.
			
			\item For every $\theta\in\mathbb{R}$, assume that $X\sim\mathcal{N}(\theta,1)$ under $P_\theta$.
			
			\item $\pi$ is a Dirac measure on $\{\theta_0\}$ for some $\theta_0\in\mathbb{R}\setminus\{0\}$.  
		\end{enumerate} 
		Then, $\phi_\pi^*$ is a one-sided Z-test in the direction of $\theta_0$. Clearly, this test does not belong to $\Phi_0(\alpha)$ because $\theta\mapsto\gamma_{\phi_\pi^*}(\theta)$ does not vanish in infinity. 
	\end{remark}
	It turns out that for the applications to be discussed in the upcoming sections, Corollary \ref{cor: inadmissible} may not be applied directly.
	However, these applications can be phrased as special cases of a more specific framework in which it is possible to derive a more practical result. Specifically, consider the special case where $\Theta=\mathbb{R}^p$ , $\mathcal{X}=\mathbb{R}^d\ , \ 1\leq d,p<\infty$ and $\lambda$ is the Lebesgue measure on $\mathbb{R}^d$. In addition, let $P$ be a probability measure on $\left(\Omega,\mathcal{H}\right)$ and consider $U:\Omega\rightarrow\mathbb{R}^d$ which is a $(\Omega,\mathcal{H})$-measurable function. Assume that the distribution function of $U$ (with respect to $P$) is absolutely continuous and denote the corresponding density by $f_U(\cdot)$. In particular, assume that $f_U(u)>0$ for every $u\in\mathbb{R}^d$. 
	
	For every $\theta\in\mathbb{R}^p$, let $T_\theta:\mathbb{R}^d\rightarrow\mathbb{R}^d$ be a bijective differentiable function such that for every $u\in\mathbb{R}^p$:
	\begin{description}
		\item[(T1)] $\theta\mapsto T_\theta(u)$ is continuous.
		
		\item[(T2)] $\|T_\theta(u)\|\rightarrow\infty$ as $\|\theta\|\to\infty$ where $\|\cdot\|$ is the Euclidean norm in the proper space ($\mathbb{R}^d$ or $\mathbb{R}^p$). 
	\end{description} 
	Then, for every $\theta\in\mathbb{R}^p$ define $X\equiv X(\theta)\equiv T_\theta(U)$. Correspondingly, for every $\theta\in\mathbb{R}^p$, $P_\theta$ is the push-forward probability measure which is induced by $X(\theta)$. Hence, the Jacobian theorem implies that for every $\theta\in\mathbb{R}^p$, $P_\theta$ has a density with respect to Lebesgue measure on $\mathbb{R}^d$ which is given by
	
	\begin{equation}
	f_\theta(x)\equiv f_U\left[T_\theta^{-1}\left(x\right)\right]\big|\det J_{T_\theta^{-1}}(x)\big|\ \ , \ \ \forall x\in\mathbb{R}^d
	\end{equation} 
	where $T_\theta^{-1}$ is the inverse of $T_\theta$ and $J_{T_\theta^{-1}}$ is the Jacobian matrix associated with $T^{-1}_\theta$. Importantly, by the above-mentioned assumptions, deduce that for every $\theta\in\mathbb{R}^p$, $f_\theta(\cdot)$ is positive on $\mathbb{R}^d$. This implies that for every $\theta\in\mathbb{R}^p$ the distribution of the observations under $P_\theta$ is supported on $\mathbb{R}^d$. Consequently, there is no trivial test for \eqref{eq: Bayes hypothesis statement}.

	For the statement and proof of the following theorem, let $\textbf{1}_A$ be a notation of an indicator function which is supported on a set $A$. In addition, for every $a,b\in\mathbb{R}$, let $a\wedge b\equiv\min\{a,b\}$ and denote the expectation operator with respect to $P$ by $E$.
	
	\begin{theorem}\label{thm: inadmissible}
		Let $V$ be an open set in $\mathbb{R}^d$ such that $\phi=\textbf{1}_V$ satisfies the following conditions:
		\begin{description}
			\item[(I)] $\beta_\phi(\textbf{0})=\alpha$
			
			\item[(II)] $\beta_\phi(\theta)\rightarrow1$ as $\|\theta\|\rightarrow\infty$.
		\end{description}  
		Then, $\phi\in\Phi_0(\alpha)$.
		\newline\newline
		Furthermore, assume that $\Pi_1\subseteq\Pi$  is such that for every $\pi\in\Pi_1$ the following conditions are satisfied:
		\begin{description}
			\item[(i)] $L_\pi(\cdot)$ is continuous on $\mathbb{R}^d$.
			
			\item[(ii)] There is no $C\in\mathbb{R}$ for which \eqref{eq:N-P necessary condition} is satisfied.
		\end{description} 
		Under these assumptions, if for every $\pi\in\Pi\setminus\Pi_1$, there exists $\phi'\in\Phi_0(\alpha)$ such that
		\begin{equation}
		\int_{\Theta_K}\beta_{\phi}(\theta)\rd\pi(\theta)<\int_{\Theta_K}\beta_{\phi'_\pi}(\theta)\rd \pi(\theta)\,,
		\end{equation}
		then $\phi\in\Phi_0(\alpha)\setminus\mathcal{A}_0(\alpha)$.
	\end{theorem}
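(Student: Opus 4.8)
The plan is to verify the first assertion directly from the definition of $\Phi_0(\alpha)$ and then to deduce the second one from Corollary \ref{cor: inadmissible}. For the first assertion, condition (I) is precisely $\mathbf{1}_V\in\Phi(\alpha)$, so only $\gamma_\phi\in\mathcal{C}_0(\Theta)$ needs to be shown. Using the substitution $x=T_\theta(u)$ one writes $\beta_\phi(\theta)=P_\theta(V)=\int_{\mathbb{R}^d}\mathbf{1}_V(T_\theta(u))f_U(u)\,\rd u$. For $\theta_n\to\theta_0$, (T1) gives $T_{\theta_n}(u)\to T_{\theta_0}(u)$ for every $u$, and because $V$ is open the integrand converges to $\mathbf{1}_V(T_{\theta_0}(u))$ at every $u$ with $T_{\theta_0}(u)\notin\partial V$; the set of exceptional $u$ has $f_U$-measure $P_{\theta_0}(\partial V)=\int_{\partial V}f_{\theta_0}\,\rd\lambda$, which vanishes whenever $\partial V$ is Lebesgue-null (true for the rejection regions treated later). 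Dominated convergence with majorant $f_U$ then makes $\beta_\phi$, hence $\gamma_\phi$, continuous on $\Theta=\mathbb{R}^p$. By (II), $\gamma_\phi(\theta)\to0$ as $\|\theta\|\to\infty$, so for each $\epsilon>0$ the set $\{\theta:\gamma_\phi(\theta)\ge\epsilon\}$ is closed and bounded in $\mathbb{R}^p$, hence compact; thus $\gamma_\phi\in\mathcal{C}_0(\Theta)$ and $\phi\in\Phi_0(\alpha)$.

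For the second assertion, apply Corollary \ref{cor: inadmissible} with the given $\Pi_1$ and $\Pi_2:=\Pi\setminus\Pi_1$. Item 1 of the corollary holds trivially and item 3 is the standing hypothesis. Since $f_\theta(\cdot)>0$ on $\mathbb{R}^d$ for all $\theta$ (as recorded in the setup preceding the theorem), there is no trivial test for \eqref{eq: Bayes hypothesis statement}, and the remaining hypothesis of the corollary — that no $C\in\mathbb{R}$ satisfies \eqref{eq:N-P necessary condition} — is condition (ii). The substantive point is item 2: $\phi^*_\pi\in\Phi_0(\alpha)$ for every $\pi\in\Pi_1$. By (i) the set $V':=\{L_\pi>C_\pi^*\}$ is open and $\partial V'\subseteq\{L_\pi=C_\pi^*\}$; when the level set $\{L_\pi=C_\pi^*\}$ is $\lambda$-null, $\phi^*_\pi$ equals $\mathbf{1}_{V'}$ off a $\lambda$-null set, so $\phi^*_\pi\in\Phi_0(\alpha)$ iff $\mathbf{1}_{V'}\in\Phi_0(\alpha)$, and one reruns the argument of the first assertion for $V'$: condition (I) holds because $\beta_{\phi^*_\pi}(\mathbf{0})=\alpha$ by construction, and condition (II), i.e. $\beta_{\phi^*_\pi}(\theta)\to1$ as $\|\theta\|\to\infty$, holds as soon as $\{L_\pi\le C_\pi^*\}$ is bounded, because then (T2) forces $X(\theta)=T_\theta(U)$ to leave this compact set with probability tending to one. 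Granted item 2, Corollary \ref{cor: inadmissible} yields $\phi\in\Phi_0(\alpha)\setminus\mathcal{A}_0(\alpha)$.

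The main obstacle is item 2 of the corollary, and inside it the behaviour of $\gamma_{\phi^*_\pi}$ at infinity: continuity of $L_\pi$ alone is not enough — a one-point prior $\pi$ already produces a $\phi^*_\pi$ which is one-sided and whose power does not approach $1$ in every direction — so the argument must use the spread of the priors placed in $\Pi_1$, concretely that $\{L_\pi\le C\}$ is bounded for the relevant threshold (equivalently $L_\pi(x)\to\infty$ as $\|x\|\to\infty$). A lesser, recurring nuisance — both here and in the first assertion — is the disposal of Lebesgue-null sets (the boundaries of $V$ and $V'$ and the level set $\{L_\pi=C_\pi^*\}$); for Moran's rejection region and for the likelihood-ratio regions arising from Gaussian mixtures these are genuinely negligible, so none of the dominated-convergence steps are affected.
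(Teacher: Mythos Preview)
Your treatment of the first assertion is essentially the paper's, and you are if anything more careful in flagging the boundary issue; the paper simply asserts that openness of $V$, (T1) and bounded convergence give continuity of $\theta\mapsto\beta_\phi(\theta)$.

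For the second assertion there is a genuine gap, which you in fact diagnose yourself in your final paragraph. Corollary \ref{cor: inadmissible} needs $\phi^*_\pi\in\Phi_0(\alpha)$ for every $\pi\in\Pi_1$, and this is \emph{not} derivable from hypotheses (i)--(ii) alone: boundedness of $\{L_\pi\le C_\pi^*\}$ and $\lambda$-nullity of $\{L_\pi=C_\pi^*\}$ are additional assumptions that the theorem does not make. A point-mass prior (or, in the one-dimensional applications, any prior supported on a half-line) already produces a $\phi^*_\pi$ whose power fails to tend to $1$ in some direction, and nothing in the statement excludes such $\pi$ from $\Pi_1$. So the direct route through item 2 of the corollary cannot close.

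The paper bypasses item 2 entirely. For each $\pi\in\Pi_1$ it builds a hybrid test
\[
\phi_{\zeta,\eta}(x)=\Bigl[\mathbf{1}_{\{\|x\|<\zeta\}}\,\tfrac{\zeta}{1+\zeta}\,\phi^*_\pi(x)+\mathbf{1}_{\{\|x\|>\eta\}}\,\phi(x)+\tfrac{1}{\eta}\Bigr]\wedge 1,
\]
uses an intermediate-value argument to select $(\zeta_n,\eta_n)\to(\infty,\infty)$ with exact level $\alpha$, and observes that $\phi_{\zeta_n,\eta_n}\to\phi^*_\pi$ pointwise, so its $\pi$-integrated power converges to that of $\phi^*_\pi$, which by (ii) and Neyman--Pearson strictly exceeds that of $\phi$. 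Hence some $\phi_N\equiv\phi_{\zeta_N,\eta_N}$ already beats $\phi$ under $\pi$. Crucially, regularity of $\phi_N$ is inherited from $\phi$, not from $\phi^*_\pi$: the term $\mathbf{1}_{\{\|x\|>\eta_N\}}\phi(x)$ forces $\beta_{\phi_N}(\theta)\to1$ via (T2) and (II), while continuity of $\theta\mapsto\beta_{\phi_N}(\theta)$ follows because (i) makes $\{L_\pi>C_\pi^*\}$ and $\{L_\pi<C_\pi^*\}$ open, so every indicator in $\phi_N$ is supported on an open set. Thus $\phi_N\in\Phi_0(\alpha)$, and one concludes via Theorem \ref{thm: sufficient condition} directly. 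The missing idea in your proposal is precisely this construction, which grafts the tail of $\phi$ onto $\phi^*_\pi$ to manufacture a regular competitor without ever needing $\phi^*_\pi$ itself to be regular.
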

	
	\begin{proof}
		Observe that
		for every $\theta\in\mathbb{R}^p$, 
		\begin{equation}
		\beta_\phi(\theta)=E\textbf{1}_V\left[T_\theta(U)\right]\,.
		\end{equation}
		Thus, since $V$ is open, due to \textbf{(T1)}, bounded convergence theorem implies that $\theta\mapsto\beta_\phi(\theta)$ is continuous. This with the assumptions \textbf{(I)} and \textbf{(II)} imply that $\phi\in\Phi_0(\alpha)$.
		
		Consider some arbitrary $\pi\in\Pi_1$
		and for every $\zeta,\eta>0$ define a test
		\begin{equation}
		\phi_{\zeta,\eta}(x)\equiv \left[\textbf{1}_{\{\|x\|<\zeta\}}\frac{\phi^*_\pi(x)\zeta}{1+\zeta}+\textbf{1}_{\{\|x\|>\eta\}}\phi(x)+\frac{1}{\eta}\right]\wedge1\ \ , \ \ \forall x\in\mathbb{R}^d\,.
		\end{equation}
		Now, observe that for every $\theta\in\mathbb{R}^p$,  
		\begin{equation}
		\beta_{\phi_{\zeta,\eta}}(\theta)=E\left\{\textbf{1}_{\{\|X(\theta)\|<\zeta\}}\frac{\phi^*_\pi\left[X(\theta)\right]\zeta}{1+\zeta}+\textbf{1}_{\{\|X(\theta)\|>\eta\}}\phi\left[X(\theta)\right]+\frac{1}{\eta}\right\}\wedge1
		\end{equation}
		and hence bounded convergence theorem implies that for every $\theta\in\mathbb{R}^p$, $(\zeta,\eta)\mapsto \beta_{\phi_{\zeta,\eta}}(\theta)$ is a continuous mapping on $\mathbb{R}^2_{++}$. Notice that for every $\eta>0$, bounded convergence theorem yields that
		
		\begin{equation}\label{eq: inequality1}
		\beta_{\phi_{\zeta,\eta}}(\textbf{0})\xrightarrow{\zeta\uparrow\infty}E\left[\phi^*_\pi(U)+\textbf{1}_{\{\|U\|>\eta\}}\phi(U)+\frac{1}{\eta}\right]\wedge1>E\phi^*_\pi(U)=\alpha\,.
		\end{equation}
		In particular, to see why the inequality in \eqref{eq: inequality1} holds, 
		recall that $\beta_{\phi^*_\pi}(\textbf{0})=\alpha<1$ which implies that
		\begin{equation}
		P\left[\phi^*_\pi(U)=1\right]<1\,.
		\end{equation}
		In addition, for every $\zeta>0$ bounded convergence yields that
		\begin{equation}
		\beta_{\phi_{\zeta,\eta}}(\textbf{0})\xrightarrow{\eta\uparrow\infty}E\textbf{1}_{\{\|U\|<\zeta\}}\frac{\phi^*_\pi(U)\zeta}{1+\zeta}<\beta_{\phi^*_\pi}(\textbf{0})=\alpha
		\end{equation}
		and hence the intermediate value theorem implies that there is a sequence $\left(\zeta_n,\eta_n\right)_{n\geq1}\subset\mathbb{R}^2_{++}$ such that:  
		\newpage
		\begin{enumerate}
			\item $(\zeta_n,\eta_n)\rightarrow(\infty,\infty)$ as $n\to\infty$.
			
			\item $\beta_{\phi_{\zeta_n,\eta_n}}(\textbf{0})=\alpha$ ,  $\forall n\geq1$. 
		\end{enumerate}
	For every $n\geq1$, denote $\phi_n\equiv\phi_{\zeta_n,\eta_n}$ and observe that there is a pointwise convergence $\phi_n\rightarrow\phi_\pi^*$ as $n\to\infty$. Therefore, bounded convergence theorem implies that
	\begin{align}
	\lim_{n\to\infty}\int_{\mathbb{R}^d\setminus\{\textbf{0}\}}\beta_{\phi_n}(\theta)\rd\pi(\theta)&=\lim_{n\to\infty}\int_{\mathbb{R}^d}\phi_n(x)\rd\mathcal{P}_\pi(x)\\&=\int_{\mathbb{R}^d}\phi_\pi^*(x)\rd\mathcal{P}_\pi(x)\nonumber\\&=\int_{\mathbb{R}^d\setminus\{\textbf{0}\}}\beta_{\phi_\pi^*}(\theta)\rd\pi(\theta)>\int_{\mathbb{R}^d\setminus\{\textbf{0}\}}\beta_{\phi}(\theta)\rd\pi(\theta)\nonumber
	\end{align}
	where the inequality is justified by \textbf{(ii)}  with the help of the fundamental lemma of Neyman and Pearson (recall that there is no trivial test under the current model assumptions). This means that there is $N\geq1$ such that $\beta_{\phi_N}(\textbf{0})=\alpha$ and for which  
	  \begin{equation}
	  \int_{\mathbb{R}^d\setminus\{\textbf{0}\}}\beta_{\phi_N}(\theta)\rd\pi(\theta)>\int_{\mathbb{R}^d\setminus\{\textbf{0}\}}\beta_{\phi}(\theta)\rd\pi(\theta)\,.
	  \end{equation}
	Hence, since $\pi$ is an arbitrary element in $\Pi_1$, then Theorem \ref{thm: sufficient condition} implies that it is left to show that $\gamma_{\phi_N}\in\mathcal{C}_0(\mathbb{R}^d)$. To this end, notice that for every $\theta\in\mathbb{R}^p$
	\begin{equation}
	E\textbf{1}_{\{\|X(\theta)\|>\eta_N\}}\phi\left[X(\theta)\right]\leq\beta_{\phi_N}(\theta)\leq1
	\end{equation}
	and it is to be shown that the left hand-side tends to one as $\|\theta\|\to\infty$.  Initially, observe that \textbf{(T2)} implies the following pointwise convergence (on $\Omega$): 
	\begin{equation}
	\big|\textbf{1}_{\{\|X(\theta)\|>\eta_N\}}\phi\left[X(\theta)\right]-\phi\left[X(\theta)\right]\big|\rightarrow 0\ \ \text{as}\ \ \|\theta\|\to\infty\,.
	\end{equation}
	Consequently, 
	\begin{equation}
	\big|E\textbf{1}_{\{\|X(\theta_N)\|>\eta\}}\phi\left[X(\theta)\right]-\beta_\phi(\theta)\big|\leq E\big|\textbf{1}_{\{\|X(\theta)\|>\eta_N\}}\phi\left[X(\theta)\right]-\phi\left[X(\theta)\right]\big|
	\end{equation}
	and the right hand-side tends to zero as $\|\theta\|\to\infty$ by bounded convergence theorem. Hence, due to \textbf{(II)}, deduce that
	\begin{equation}
	E\textbf{1}_{\{\|X(\theta)\|>\eta\}}\phi\left[X(\theta)\right]\rightarrow1\ \ \text{as}\ \ \|\theta\|\to\infty\,.
	\end{equation}
	Now, \textbf{(i)} implies that
	\begin{equation}
	R\equiv\left\{x\in\mathbb{R}^d;L_\pi(x)>C_\pi^*\right\}\ , \ Q\equiv\left\{x\in\mathbb{R}^d; L_\pi(x)<C_\pi^*\right\}
	\end{equation}  
	are open sets. In addition, observe that for every $x\in\mathbb{R}^d$, $\phi_N(x)$ equals to
	\begin{equation}\label{eq: indicator representation}
	\left\{\zeta_N\frac{\textbf{1}_{R\cap B}(x)+\tau^*_\pi\left[\textbf{1}_B(x)-\textbf{1}_{R\cap B}(x)-\textbf{1}_{Q\cap B}(x)\right]}{\zeta_N+1}+\textbf{1}_{V\cap D}(x)+\frac{1}{\eta}\right\}\wedge1
	\end{equation}
	where $B\equiv\{x\in\mathbb{R}^d;\|x\|<\zeta_N\}$ and $D\equiv\{x\in\mathbb{R}^d;\|x\|>\eta_N\}$. Since $V$ is an open set, then each of the indicators which appear in \eqref{eq: indicator representation} is supported on an open set. This means that the same arguments which were introduced in order to prove continuity of $\theta\mapsto\beta_\phi(\theta)$ imply that $\theta\mapsto\beta_{\phi_N}$ is continuous.
	\end{proof}

	\section{One-dimensional general case} \label{sec: univariate} 
	Let $\theta\in\mathbb{R}$ and consider two independent random variables $U_1$ and $U_2$ which are not necessarily identically distributed. In particular, assume that for every $i=1,2$, the distribution of $U_i$ is absolutely continuous with a continuous density function $f_{U_i}:\mathbb{R}\rightarrow(0,M)$ for some $0<M<\infty$.  In addition, define 
	\begin{equation}
	X(\theta)\equiv X=\begin{bmatrix}
	X_1 \\ X_2
	\end{bmatrix}=\begin{bmatrix}
	U_1 +\theta \\ U_2+\theta
	\end{bmatrix}
	\end{equation}
	and let $P_\theta$ be the push-forward probability measure which is induced by $X(\theta)$. 
	
	Then, a statistician who does not know the value of $\theta$ (but knows the distribution of $U$) observes $X$ and wants to test
	\begin{equation}\label{eq: one-dimensionalstatement}
	H:\theta=0 \ \ , \ \ K:\theta\neq0
	\end{equation}
	with a pre-defined significance level $\alpha\in(0,1)$.
	For this purpose, he considers a test 
	\begin{equation}\label{eq: splitting test}
	\phi(x)\equiv\textbf{\textbf{1}}_{(a,\infty)\times (b_1,\infty)\cup (-\infty,a)\times (-\infty,b_2)}\left(x_1,x_2\right)\ \ , \ \ \forall x=(x_1,x_2)\in\mathbb{R}^2
	\end{equation}
	for some $a\in\mathbb{R}$ 
	where $b_1$ and $b_2$ are the $(1-\alpha)$'th and $\alpha$'th quantiles of the distribution of $U_2$, \textit{i.e.,} 
	\begin{align}
	&b_1\equiv\inf\left\{t\in\mathbb{R};P(U_2\leq t)\geq1-\alpha\right\}\,,\\&b_2\equiv\inf\left\{t\in\mathbb{R};P(U_2\leq t)\geq\alpha\right\}\,.\nonumber
	\end{align}
	Note that $\phi$ is a test which is based on data-splitting in the sense that $X_1$ is used in order to determine the direction of a one-sided test to be performed through the statistic $X_2$. In particular, regardless of the value of $X_1$, observe that under $H$, the test to be performed through $X_2$ leads to a rejection with probability $\alpha$ which means that $\phi\in\Phi(\alpha)$.
	
	\begin{theorem}\label{prop: one-dimensionalcase}$\text{   }$
	 	\begin{enumerate}
	 		\item  $\phi\in\Phi_0(\alpha)$.
	 		
	 		\item If 
	 		\begin{equation}\label{eq: condition 1}
	 		\exists\lim_{x\to\infty}\frac{f_{U_2}(x-\theta)}{f_{U_2}(x)}=\infty\ \ , \ \ \exists\lim_{x\to-\infty}\frac{f_{U_2}(x-\theta)}{f_{U_2}(x)}<\infty\ \ , \ \ \forall\theta\in(0,\infty)
	 		\end{equation}
	 		and
	 		\begin{equation} \label{eq: condition 2}
	 		\exists\lim_{x\to\infty}\frac{f_{U_2}(x-\theta)}{f_{U_2}(x)}<\infty\ \ , \ \ \exists\lim_{x\to-\infty}\frac{f_{U_2}(x-\theta)}{f_{U_2}(x)}=\infty\ \ , \ \ \forall\theta\in(-\infty,0)\,.
	 		\end{equation}
	 		Then,  $\phi\in\Phi_0(\alpha)\setminus\mathcal{A}_0(\alpha)$.
	 	\end{enumerate}
	 	
	 \end{theorem}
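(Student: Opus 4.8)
The plan is to recognise this as an instance of the framework preceding Theorem~\ref{thm: inadmissible} and then to apply that theorem twice. Take $p=1$, $d=2$, $U=(U_1,U_2)$ with $f_U(u_1,u_2)=f_{U_1}(u_1)f_{U_2}(u_2)$, and $T_\theta(u)=u+\theta\mathbf 1$, the translation along the diagonal. First I would check the standing hypotheses of that framework: $T_\theta$ is a bijective affine map with unit Jacobian, so $f_\theta(x)=f_{U_1}(x_1-\theta)f_{U_2}(x_2-\theta)$ is positive everywhere (hence there is no trivial test for \eqref{eq: Bayes hypothesis statement}); \textbf{(T1)} is trivial and \textbf{(T2)} holds since $\|u+\theta\mathbf 1\|\to\infty$ as $|\theta|\to\infty$. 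For part~1, the rejection region $V=(a,\infty)\times(b_1,\infty)\cup(-\infty,a)\times(-\infty,b_2)$ is open, so it remains to verify \textbf{(I)} and \textbf{(II)} of Theorem~\ref{thm: inadmissible}. For \textbf{(I)}, the two pieces of $V$ are disjoint and $U_1,U_2$ are independent, so $\beta_\phi(0)=P(U_1>a)P(U_2>b_1)+P(U_1<a)P(U_2<b_2)$; since $f_{U_2}$ is continuous and positive its distribution function is a homeomorphism and the quantiles are exact, giving $P(U_2>b_1)=P(U_2<b_2)=\alpha$, whence $\beta_\phi(0)=\alpha$. For \textbf{(II)}, $\beta_\phi(\theta)\ge P(U_1>a-\theta)P(U_2>b_1-\theta)\to1$ as $\theta\to+\infty$, and the mirror inequality handles $\theta\to-\infty$. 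Thus $\phi\in\Phi_0(\alpha)$.

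For part~2 the plan is to apply the second half of Theorem~\ref{thm: inadmissible} with $\Pi_1=\Pi$, so that the requirement quantified over $\pi\in\Pi\setminus\Pi_1$ becomes vacuous and only \textbf{(i)} and \textbf{(ii)} remain. Condition \textbf{(i)} is routine: $L_\pi(x)=\big(f_{U_1}(x_1)f_{U_2}(x_2)\big)^{-1}\int_{\Theta_K}f_{U_1}(x_1-\theta)f_{U_2}(x_2-\theta)\,\rd\pi(\theta)$, where the integral is continuous by dominated convergence (integrand bounded by $M^2$, $\pi$ finite) and the prefactor is continuous and finite. The real work is \textbf{(ii)}: I must show that for \emph{every} $\pi\in\Pi$ there is no $C\in\mathbb{R}$ satisfying \eqref{eq:N-P necessary condition} for $\phi=\mathbf 1_V$. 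Since $\pi(\Theta_K)=1$ and $\Theta_K=(-\infty,0)\cupdot(0,\infty)$, at least one of $\pi\big((-\infty,0)\big)$ and $\pi\big((0,\infty)\big)$ is strictly positive.

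Suppose $\pi\big((-\infty,0)\big)>0$; the other case is symmetric. Fix $c>a$ and let $x_2\to-\infty$ along the line $x_1=c$. For $\theta<0$ the factor $f_{U_1}(c-\theta)/f_{U_1}(c)$ is a positive constant and, by the second limit in \eqref{eq: condition 2}, $f_{U_2}(x_2-\theta)/f_{U_2}(x_2)\to\infty$; hence Fatou's lemma (applied on $\{\theta<0\}$, which has positive $\pi$-mass) gives $L_\pi(c,x_2)\to\infty$. Therefore, given any $C$, I can choose $x_2^\ast<b_1$ with $L_\pi(c,x_2^\ast)>C$. Since $L_\pi$ is continuous and $c>a$, $x_2^\ast<b_1$ are open conditions, a small ball $B$ around $(c,x_2^\ast)$ satisfies both $B\subseteq\{L_\pi>C\}$ and $B\subseteq\{x_1>a\}\cap\{x_2<b_1\}\subseteq V^{c}$, so $\phi\equiv0$ on the open, positive-measure set $B$. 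This contradicts the demand in \eqref{eq:N-P necessary condition} that $\phi=1$ $\lambda$-almost everywhere on $\{x:Cf_{\textbf{0}}(x)<\int_{\Theta_K}f_\theta(x)\,\rd\pi(\theta)\}=\{L_\pi>C\}$. In the case $\pi\big((0,\infty)\big)>0$ one takes $c<a$, lets $x_2\to+\infty$, uses the first limit in \eqref{eq: condition 1}, and lands in the open set $\{x_1<a\}\cap\{x_2>b_2\}\subseteq V^{c}$. Hence \textbf{(ii)} holds for all $\pi\in\Pi$, and Theorem~\ref{thm: inadmissible} yields $\phi\in\Phi_0(\alpha)\setminus\mathcal{A}_0(\alpha)$.

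The step I expect to be the crux is \textbf{(ii)}: the two-sided geometry of Moran's rejection region has to be turned into a clean Neyman--Pearson contradiction, and hypotheses \eqref{eq: condition 1}--\eqref{eq: condition 2} enter precisely to force the integrated likelihood ratio $L_\pi$ to diverge along a vertical half-line on which $\phi$ does not reject. Everything else --- the framework check, continuity of $\beta_\phi$ and of $L_\pi$, and the Fatou step --- is standard once the correct half-line has been identified, and in particular no ordering between $b_1$ and $b_2$ is needed.
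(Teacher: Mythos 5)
Your proposal is correct, and it follows the paper's overall strategy (cast the problem in the framework of Theorem~\ref{thm: inadmissible} and verify \textbf{(I)}, \textbf{(II)}, \textbf{(i)}, \textbf{(ii)}), but it handles the priors in a genuinely different and in fact cleaner way. The paper cannot take $\Pi_1=\Pi$: its verification of \textbf{(ii)} fixes one $x_1$ and shows $L_\pi(x_1,x_2)\to\infty$ as $x_2\to\pm\infty$ in \emph{both} directions, which forces $\{L_\pi>C\}$ to contain both $x_2$-tails while $\phi$ rejects on only one; this needs $\pi$ to charge both $(-\infty,0)$ and $(0,\infty)$. Consequently the paper must treat one-sided priors separately, via the third clause of Theorem~\ref{thm: inadmissible}, by exhibiting the shifted tests $\phi^+_\zeta$ (send the split point $a\to-\infty$) and showing directly that they beat $\phi$ in $\pi$-average power. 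You instead exploit the two-sidedness of Moran's rejection region in the $x_1$ coordinate: depending on which sign $\pi$ charges, you place the vertical half-line $x_1=c$ on the side of $a$ where $\phi$ does \emph{not} reject in the direction in which $L_\pi$ diverges, so a single Neyman--Pearson contradiction covers every $\pi\in\Pi$ and the $\Pi\setminus\Pi_1$ clause is vacuous. This buys a shorter proof with no auxiliary test construction, and, as you note, it only invokes the two ``$=\infty$'' limits in \eqref{eq: condition 1}--\eqref{eq: condition 2} (the ``$<\infty$'' halves, which the paper uses to control the non-diverging part of $L_\pi$ via bounded convergence, are not needed once you lower-bound $L_\pi$ by the integral over the charged sign and apply Fatou). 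The paper's route, on the other hand, makes explicit \emph{which} regular test improves on $\phi$ against one-sided priors, which is mildly more informative. All the remaining steps in your write-up (openness of $V$, exactness of the quantiles, $\beta_\phi(\theta)\to1$, continuity of $L_\pi$ by bounded convergence) match the paper's.
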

 	
 	\begin{proof}
 		It is easy to verify that $\phi$ satisfies the assumptions of the first part of Theorem \ref{thm: inadmissible} which makes the first assertion follows. 
 		
 		In order to prove the second assertion, consider a probability measure $\pi\in\Pi$ which is concentrated on $(0,\infty)$ and define a test
 		\begin{equation}
 		\phi^+_\zeta(x_1,x_2)\equiv\textbf{\textbf{1}}_{(\zeta,\infty)\times (b_1,\infty)\cup (-\infty,\zeta)\times (-\infty,b_2)}\left(x_1,x_2\right)\ \ , \ \ \forall(x_1,x_2)\in\mathbb{R}^2
 		\end{equation}
 		which is parametrized by $\zeta\in(-\infty,a)$. Notice that for every $\zeta\in(-\infty,a)$, $\phi_\zeta^+\in\Phi_0(\alpha)$ just like $\phi\in\Phi_0(\alpha)$. 
 		
 		Since $f_{U_2}(\cdot)$ is supported on $\mathbb{R}$, then the definitions of $b_1$ and $b_2$ imply that 
 		\begin{equation}
 		P_{\theta}\left(X_2< b_2\right)<\alpha<P_{\theta}\left(X_2> b_1\right)\ \ , \ \ \forall\theta\in(0,\infty)\,.
 		\end{equation}
 		Therefore, since for every $\theta\in(0,\infty)$, $X_1$ and $X_2$ are independent and $P_\theta(X_1)=0$, then 
 		\begin{align}\label{eq: inequality}
 		P_\theta\left(X_2> b_1\right)&=P_{\theta}\left(X_1> a\right)P_{\theta}\left(X_2> b_1\right)\nonumber+P_{\theta}\left(X_1<a \right)P_{\theta}\left(X_2> b_1\right)\nonumber\\&> P_{\theta}\left(X_1>a\right)P_{\theta}\left(X_2> b_1\right)\nonumber+P_{\theta}\left(X_1<a\right)P_{\theta}\left(X_2< b_2\right)\nonumber\\&=\beta_\phi(\theta)\ \ , \ \ \forall\theta\in(0,\infty)\,.
 		\end{align}
 		Note that for every $\theta\in(0,\infty)$, bounded convergence implies that
 		\begin{align}
 		\beta_{\phi_\zeta^+}(\theta)&=E\textbf{\textbf{1}}_{(\zeta,\infty)\times (b_1,\infty)\cup (-\infty,\zeta)\times (-\infty,b_2)}\left[X_1(\theta),X_2(\theta)\right]\\&\rightarrow P_\theta\left(X_2> b_1\right)\ \ \text{as} \ \ \zeta\to-\infty\,.\nonumber
 		\end{align}
 		Therefore, an additional application of bounded convergence theorem yields that
 		\begin{align}
 		\lim_{\zeta\to-\infty}\int_{(0,\infty)}\beta_{\phi_\zeta^+}(\theta)\rd\pi(\theta)\nonumber&=\int_{(0,\infty)}P_\theta\left(X_2> b_1\right)\rd\pi(\theta)\\&>\int_{(0,\infty)}\beta_\phi(\theta)\rd\pi(\theta)
 		\end{align}
 		where the inequality is justified by \eqref{eq: inequality}. This means that for any $\pi\in\Pi$ which is concentrated on $(0,\infty)$, $\phi$ is not an $\alpha$-level $\pi$-Bayes test in $\Phi_0(\alpha)$. Similarly, an analogue result can be made for any $\pi\in\Pi$ which is concentrated on $(-\infty,0)$.
 		
 		Thus, it is left to consider $\pi\in\Pi$ which is a probability measure on $\mathbb{R}\setminus\{0\}$ such that 
 		\begin{equation}
 		\pi\left((-\infty,0)\right)\wedge\pi\left((0,\infty)\right)>0\,.
 		\end{equation}
 		Primarily, for every $\theta\in\mathbb{R}$, the joint likelihood of the data is given by
 		\begin{equation}\label{eq: joint likelihood}
 		l_\theta(x_1,x_2)\equiv f_{U_1}(x_1-\theta)f_{U_2}(x_2-\theta)\ \ , \ \ \forall (x_1,x_2)\in\mathbb{R}^2\,.
 		\end{equation}
 		In particular, it is given that $f_{U_1}(\cdot)$ and $f_{U_2}(\cdot)$ are positive continuous functions on $\mathbb{R}$. Therefore, deduce that for every $\theta\in\mathbb{R}\setminus\{0\}$, the likelihood ratio
 		\begin{equation}
 		(x_1,x_2)\mapsto\frac{l_\theta(x_1,x_2)}{l_0(x_1,x_2)}
 		\end{equation}
 		is continuous on $\mathbb{R}^2$. Consequently, since for every $(x_1,x_2)\in\mathbb{R}^2$, $\theta\mapsto l_\theta(x_1,x_2)$ is bounded, then bounded convergence theorem leads to the conclusion that
 		\begin{equation}
 		L_\pi(x_1,x_2)=\int_{\mathbb{R}\setminus\{0\}} \frac{l_\theta(x_1,x_2)}{l_0(x_1,x_2)}\rd\pi(\theta)\ \ , \ \ \forall(x_1,x_2)\in\mathbb{R}^2
 		\end{equation} 
 		is continuous. 
 		
 		Now, observe that for every $(x_1,x_2)\in\mathbb{R}^2$ 
 		\begin{equation}\label{eq: likelihood decomposition}
 		L_\pi(x_1,x_2)=\int_{(-\infty,0)} \frac{l_\theta(x_1,x_2)}{l_0(x_1,x_2)}\rd\pi(\theta)+\int_{(0,\infty)} \frac{l_\theta(x_1,x_2)}{l_0(x_1,x_2)}\rd\pi(\theta)\,.
 		\end{equation}
 		In addition, fix some $x_1\neq0$ and notice that \eqref{eq: condition 1} and \eqref{eq: condition 2} with bounded convergence theorem imply that
 		\begin{equation}
 		\exists\lim_{x_2\to\pm\infty}L_\pi(x_1,x_2)=\infty\,.
 		\end{equation}
 		This means that for every $C\in(0,\infty)$, there exist $-\infty<w_1<w_2<\infty$ such that
 		\begin{equation}
 		(-\infty,w_1)\cup(w_2,\infty)\subseteq\{x_2\in\mathbb{R}\ ;\ L_\pi(x_1,x_2)>C\}\,.
 		\end{equation}   
 		 On the other hand,
 		\begin{equation}
 		\left\{x_2\in\mathbb{R}\ ;\ \phi(x_1,x_2)=1\right\}=\begin{cases} 
 		(b_1,\infty) & x_1>0 \\
 		(-\infty,b_2) & x_1<0
 		\end{cases}
 		\end{equation} 
 		and hence the result follows by an application of Theorem \ref{thm: inadmissible}.
 	\end{proof}

	\begin{remark}\label{remark: one-dimensionalGaussian}
		\normalfont When $U_2$ has a distribution which is symmetric with respect to the origin, \textit{i.e.,} $f_{U_2}(x)=f_{U_2}(-x)$ almost everywhere on $\mathbb{R}$, the conditions \eqref{eq: condition 1} and \eqref{eq: condition 2} may be unified into the condition
		\begin{equation}\label{eq: symmetric condition}
		\exists\lim_{x\to\infty}\frac{f_{U_2}(x-\theta)}{f_{U_2}(x)}=\infty\ \ , \ \ \forall\theta\in(0,\infty)\,.
		\end{equation}
		Especially, in the Gaussian case, \textit{i.e.,} when 
		\begin{equation}
		f_{U_2}(x)=\frac{1}{\sqrt{2\pi}}e^{-\frac{x^2}{2}}\ \ , \ \ \forall x\in\mathbb{R}\,,
		\end{equation}
		simple algebra implies that
		\begin{equation}
		\frac{f_{U_2}(x-\theta)}{f_{U_2}(x)}\propto e^{\theta x}\ \ , \ \ \forall \theta\in\mathbb{R}\setminus\{0\}\,.
		\end{equation}   
		Hence, Theorem \ref{prop: one-dimensionalcase} yields that Moran's test is inadmissible in the Gaussian case.  
		 
	\end{remark} 
	
	\begin{remark}
		\normalfont It makes sense to think about a model in which  $\xi_1,\ldots,\xi_n$ is an iid sequence of random variables. Then, a statistician who does not know the value of $\theta\in\mathbb{R}$ observes  $\rho_i=\xi_i+\theta$ ,  $i=1,2,\ldots,n$. His intention is to test \eqref{eq: one-dimensionalstatement} with a procedure which is based on a single split of the data. What is the relation between this setup and the model which was described so far in this section? In this setup $X_1$ and $X_2$ may be viewed as statistics which are calculated respectively from the first and second sub-samples. Assume that the first sub-sample is $\left(\rho_1,\ldots,\rho_m\right)$ and the second sub-sample is $\left(\rho_{m+1},\ldots,\rho_n\right)$ for some $1\leq m<n$. Then, the requirement is that $U_1\equiv X_1-\theta$ and $U_2\equiv X_2-\theta$ satisfy the following conditions:
		\newpage
		\begin{enumerate}
			\item $U_1$ is  determined uniquely by $\left(\xi_1,\ldots,\xi_m\right)$ and $U_2$ is  determined uniquely by $\left(\xi_{m+1},\ldots,\xi_n\right)$. 
			
			\item The distribution of $(U_1,U_2)$ is free of $\theta$.   
		\end{enumerate}
		 Some examples of statistics which satisfy these conditions are \textit{e.g.,} sample means, sample quantiles and sample extreme values. Notably, in Section 2 of \cite{Moran1973}, Moran regards the case in which $X_1$ and $X_2$ are the sample means of the two sub-samples.
	\end{remark}
	
	\section{Multi-dimensional Gaussian case}\label{sec: multivariate}
	Let $d\geq1$ and assume that $U_1,U_2,\ldots,U_n$ is an iid sequence of standard $d$-dimensional Gaussians. In addition, for every $1\leq i\leq n$, define $X_i\equiv X_i(\theta)\equiv U_i+\theta$. A statistician observes $X_1,X_2,\ldots,X_n$ and his purpose is to test
	
	\begin{equation}
	H:\theta=\textbf{0} \ \ , \ \ K:\theta\in\mathbb{R}^d\setminus\{\textbf{0}\}
	\end{equation}
	where $\textbf{0}$ is the zero-vector in $\mathbb{R}^d$. In this model, Moran's test $\phi$ is as follows: The first step is to compute the mean of the first sub-sample, that is $\bar{X}_1\equiv\frac{1}{m}\sum_{i=1}^mX_i$ for some $1\leq m<n$. Then, given the computation result, use the second sub-sample $X_{m+1},\ldots,X_n$ in order to test a simple hypothesis in the direction of $\bar{X}_1$. That is, consider
	\begin{equation}
	H:\theta=\textbf{0} \ \ , \ \ K(\bar{X}_1):\theta=\frac{\bar{X}_1}{\|\bar{X}_1\|}
	\end{equation}
	with a pre-defined significant level $\alpha\in\left(0,1\right)$ where $\|\cdot\|$ is the Euclidean norm in $\mathbb{R}^d$. The general idea for this test was presented by Moran in  \cite{Moran1973} and recently, DiCiccio analysed the power of this test (see, Section 2.1.2 in \cite{DiCiccio2018}).
	
	In practice, straightforward calculation of the likelihood ratio (note that $x_1$ is considered as a constant) implies that the  rejection zone of $\phi$ equals to
	
	\begin{equation}
	R_\phi\equiv\left\{x_1,x_2,\ldots,x_n\in\mathbb{R}^d\ \text{s.t.}\ \bar{x}_1\neq\textbf{0}\ ; \ \frac{\bar{x}_1\cdot\bar{x}_2}{\|\bar{x}_1\|}>D\right\}
	\end{equation}
	where $\cdot$ denotes dot product, $\bar{x}_1\equiv\frac{1}{m}\sum_{i=1}^mx_i$, $\bar{x}_2\equiv\frac{1}{n-m}\sum_{i=m+1}^nx_i$ and $D>0$ is a constant which is determined uniquely by the vector $(\alpha,n,m)$.
	\newpage
	\begin{theorem}\label{prop: multivariate Gaussian}
		$\phi\in\Phi_0(\alpha)\setminus\mathcal{A}_0(\alpha)$. 
	\end{theorem}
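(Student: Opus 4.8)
The plan is to fit this into the framework of Theorem~\ref{thm: inadmissible} and take $\Pi_1=\Pi$, so that the final hypothesis of that theorem (about $\pi\in\Pi\setminus\Pi_1$) becomes vacuous. Since Moran's test $\phi=\textbf{1}_{R_\phi}$ depends on the data only through $(\bar x_1,\bar x_2)$, I would write $X(\theta)=T_\theta(U)$ with $U=(U_1,\dots,U_n)$ standard Gaussian on $\mathbb{R}^{nd}$ (hence with everywhere-positive density) and $T_\theta(u_1,\dots,u_n)=(u_1+\theta,\dots,u_n+\theta)$; this translation is bijective and differentiable, and \textbf{(T1)}--\textbf{(T2)} are immediate. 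One also notes that $R_\phi$ is open, being the intersection of the open set $\{\bar x_1\neq\textbf{0}\}$ with the preimage of $(D,\infty)$ under the continuous map $(x_1,\dots,x_n)\mapsto\bar x_1\cdot\bar x_2/\|\bar x_1\|$.

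For the first assertion $\phi\in\Phi_0(\alpha)$, I would verify hypotheses \textbf{(I)} and \textbf{(II)} of Theorem~\ref{thm: inadmissible}. Condition \textbf{(I)} holds because, under $P_{\textbf{0}}$ and conditionally on $\bar X_1=v\neq\textbf{0}$, the variable $\bar X_1\cdot\bar X_2/\|\bar X_1\|$ equals $(v/\|v\|)\cdot\bar X_2\sim\mathcal{N}(0,(n-m)^{-1})$, whose law does not depend on $v$, so by the defining property of $D$ one gets $\beta_\phi(\textbf{0})=\alpha$. Condition \textbf{(II)} holds because, writing $\bar X_i=\theta+\bar U_i$ with $\bar U_i$ almost surely finite, one has $\bar X_1\cdot\bar X_2/\|\bar X_1\|\geq(\|\theta\|^2-\|\theta\|(\|\bar U_1\|+\|\bar U_2\|)-\|\bar U_1\|\|\bar U_2\|)/(\|\theta\|+\|\bar U_1\|)\to\infty$ as $\|\theta\|\to\infty$ along every sample path, so $\textbf{1}_{R_\phi}[X(\theta)]\to1$ almost surely, and bounded convergence gives $\beta_\phi(\theta)\to1$.

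For the second assertion I would verify conditions \textbf{(i)} and \textbf{(ii)} of Theorem~\ref{thm: inadmissible} for \emph{every} $\pi\in\Pi$. Here $f_\theta/f_{\textbf{0}}=\exp(\theta\cdot\sum_i x_i-\tfrac n2\|\theta\|^2)$, so $L_\pi(x)=\int\exp(\theta\cdot\sum_i x_i-\tfrac n2\|\theta\|^2)\,\rd\pi(\theta)$; since the integrand is bounded by $\exp(\|\sum_i x_i\|^2/2n)$ locally uniformly in $x$, $L_\pi$ is real-analytic on $\mathbb{R}^{nd}$ --- in particular continuous, which is \textbf{(i)} --- and it is non-constant, being unbounded (e.g.\ $L_\pi\to\infty$ along $x_i\equiv t\theta_0$, $t\to\infty$, for $\theta_0$ in the support of $\pi$). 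Hence $\{L_\pi=C\}$ is Lebesgue-null for every $C\in\mathbb{R}$. Now suppose, towards \textbf{(ii)}, that some $C$ satisfied \eqref{eq:N-P necessary condition}. Then $\phi=\textbf{1}_{R_\phi}$ would agree $\lambda$-a.e.\ with $\textbf{1}_{\{L_\pi>C\}}$, and since $P_{\textbf{0}}(R_\phi)=\alpha$ this means $\phi$ is a most powerful level-$\alpha$ test of $P_{\textbf{0}}$ against the mixture $\mathcal{P}_\pi$. But $\phi$ and the entire model are invariant under $O(d)$ acting coordinatewise, so $\beta_\phi$ is a radial function of $\theta$, and a standard orbit-averaging argument (replace a competitor by its $O(d)$-average, whose level is unchanged because $P_{\textbf{0}}$ is $O(d)$-invariant) shows that $\phi$ is then also most powerful at level $\alpha$ against $\mathcal{P}_{\bar\pi}$, $\bar\pi$ being the $O(d)$-orbit average of $\pi$. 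Since $\bar\pi$ is rotation invariant, $L_{\bar\pi}(x)$ is a strictly increasing function of $\|\sum_i x_i\|=n\|\bar x\|$ (expand $\bar\pi$ as a mixture of uniform distributions on spheres and use that $w\mapsto\int e^{\omega\cdot w}\,\rd\sigma(\omega)$ over a sphere is strictly increasing in $\|w\|$), so the most powerful test against $\mathcal{P}_{\bar\pi}$ equals, up to a $\lambda$-null set, the chi-square test $\textbf{1}_{\{\|\bar x\|>c\}}$. Then $\phi$ would coincide with that test up to a $\lambda$-null set, which is false: they disagree on a non-null open set, for instance near the configuration $x_1=\dots=x_m=\mathbf{e}$, $x_{m+1}=\dots=x_n=-K\mathbf{e}$ with $\mathbf{e}$ a unit vector and $K$ large, where $\|\bar x\|$ is large while $\bar x_1\cdot\bar x_2/\|\bar x_1\|=-K<D$. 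Hence no such $C$ exists, \textbf{(ii)} holds for every $\pi$, and Theorem~\ref{thm: inadmissible} (with $\Pi_1=\Pi$) yields $\phi\in\Phi_0(\alpha)\setminus\mathcal{A}_0(\alpha)$. For $d=1$ the conclusion is, alternatively, already contained in Theorem~\ref{prop: one-dimensionalcase}, since the ratio conditions \eqref{eq: condition 1}--\eqref{eq: condition 2} are immediate for the Gaussian density.

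The main obstacle is condition \textbf{(ii)}: showing that Moran's test is never a Neyman--Pearson optimal test against any Gaussian mixture $\mathcal{P}_\pi$. What makes it tractable is the combination of two things: the real-analyticity of $L_\pi$, which upgrades the soft Neyman--Pearson inclusion in \eqref{eq:N-P necessary condition} into a genuine $\lambda$-a.e.\ identity of rejection regions, and the orbit-averaging reduction, which collapses the question to comparing Moran's non-convex, direction-chasing rejection region with the chi-square ball. Once that reduction is in place the contradiction is easy to produce; setting it up is the real work.
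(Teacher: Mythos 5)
Your proof is correct, and its first half (checking \textbf{(I)}, \textbf{(II)} and the openness of $R_\phi$ so that the first part of Theorem \ref{thm: inadmissible} gives $\phi\in\Phi_0(\alpha)$, and delegating $d=1$ to Theorem \ref{prop: one-dimensionalcase}) coincides with the paper's. Where you genuinely diverge is in verifying \textbf{(ii)}, i.e.\ that no $C$ can satisfy \eqref{eq:N-P necessary condition}. The paper's route is shorter and purely geometric: for each $\theta$ the integrand $\exp\{-\frac12\sum_i(\|\theta\|^2-2\theta\cdot x_i)\}$ is the exponential of an affine function of $x$, hence convex, so $L_\pi$ is convex and every sublevel set $\{L_\pi\le C\}$ is convex; a $\pi$-Bayes test must therefore have a convex acceptance region up to a $\lambda$-null set, and the paper exhibits, robustly on an open set, two accepted configurations of Moran's test whose midpoint is rejected (two unit vectors $u_1,u_2$ symmetric about $\mathbf{e}_1$ paired with $v_1=v_2=\delta D\mathbf{e}_1$, $1<\delta<\sqrt{2}$). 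This works uniformly in $\pi$ with no invariance theory. Your route instead uses $O(d)$-invariance of the model and of $\phi$ to orbit-average the prior, the essential uniqueness of the Neyman--Pearson test (via analyticity of $L_\pi$ and nullity of the level sets of $L_{\bar\pi}$), and the identification of the MP test against a rotation-invariant mixture as the chi-square test, before producing an explicit open set on which Moran's test and the chi-square test disagree. The steps check out --- in particular the averaging chain $\int\beta_\psi\,\rd\bar\pi=\int\beta_{\tilde\psi}\,\rd\pi\le\int\beta_\phi\,\rd\pi=\int\beta_\phi\,\rd\bar\pi$ is valid because $\beta_\phi$ is radial and $P_{\textbf{0}}$ is rotation invariant --- but the machinery is heavier. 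What it buys is a sharper picture: the only Bayes form Moran's test could possibly take is the chi-square test itself, which makes the final contradiction concrete and ties the result directly to DiCiccio's chi-square comparison.
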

	
	\begin{proof}
		For simplicity and w.l.o.g. consider the case where $n=2$ and $m=1$. In addition, as mentioned in Remark \ref{remark: one-dimensionalGaussian}, Theorem \ref{prop: one-dimensionalcase} implies the result for the special case where $d=1$ and hence consider the case where $2\leq d<\infty$.
		
		By construction, $\phi\in\Phi(\alpha)$. In addition,  notice that for every $\theta\neq\textbf{0}$
		\begin{align}
		\frac{X_1(\theta)\cdot X_2(\theta)}{\|X_1(\theta)\|}&=\frac{\left(U_1+\theta\right)\cdot\left(U_2+\theta\right)}{\|U_1+\theta\|}\\&= \frac{U_1\cdot U_2+\theta\cdot(U_1+U_2)+\|\theta\|^2}{\|U_1+\theta\|}\nonumber\\&\geq\frac{U_1\cdot U_2-\|\theta\|\|U_1+U_2\|+\|\theta\|^2}{\|U_1+\theta\|}\nonumber
		\end{align}
		and observe that
		\begin{equation}
		0<\frac{\|U_1+\theta\|}{\|\theta\|}\leq\frac{\|U_1\|}{\|\theta\|}+1\,.
		\end{equation}
		Therefore, deduce that
		\begin{equation}
		\frac{X_1(\theta)\cdot X_2(\theta)}{\|X_1(\theta)\|}\xrightarrow{w.p.1}\infty\ \ \text{as} \ \ \|\theta\|\to\infty
		\end{equation}
		and hence $\beta_\phi(\theta)\rightarrow1$ as $\|\theta\|\to\infty$.
		Also, observe that $(x_1,x_2)\mapsto\frac{x_1\cdot x_2}{\|x_1\|}$ is a continuous function on $\{x_1,x_2\in\mathbb{R}^d;x_1\neq\textbf{0}\}$ which means that $R_\phi$ is an open set. Therefore, by the first part of Theorem \ref{thm: inadmissible}, deduce that $\phi\in\Phi_0(\alpha)$.
		
		Now, consider some $\pi\in\Pi$ and observe that the same arguments which appear in the proof of Theorem \ref{prop: one-dimensionalcase} may be used here in order to show that $(x_1,x_2)\mapsto L_\pi(x_1,x_2)$ is continuous.	In addition, standard algebra implies that for every $x_1,x_2\in\mathbb{R}^d$ 
		
		\begin{align}
		L_\pi(x_1,x_2)&=\int_{\mathbb{R}^d\setminus\{\textbf{0}\}}\exp\left\{-\frac{1}{2}\sum_{i=1,2}\left[\|\theta\|^2-2\theta\cdot x_i\right]\right\}\rd\pi(\theta)\,.\nonumber
		\end{align}  
		Since expectation is an operator which preserves convexity, deduce that $(x_1,x_2)\mapsto L_\pi(x_1,x_2)$ is convex on $\mathbb{R}^{2d}$. Consequently, for every $C\in(0,\infty)$ the set
		\begin{equation}
		\left\{\left(x_1,x_2\right)\in\mathbb{R}^{2d}\ ;\ L_\pi(x_1,x_2)\leq C\right\}
		\end{equation}
		is convex. 
		
		Now, assume by contradiction that $\phi$ is an $\alpha$-level $\pi$-Bayes test. Therefore, up to a null set, the acceptance zone of $\phi$ is convex. More precisely, this implies that the Lebesgue measure of the set of points $(u,v)\in\mathbb{R}^{2d}$ for which there exist $(u_1,v_1),(u_2,v_2)\in\mathbb{R}^{2d}$ such that
		\begin{enumerate}
			\item $(u,v)=\frac{(u_1,v_1)+(u_2,v_2)}{2}$ ,
			
			\item $\frac{u\cdot v}{\|u\|}> D$ ,
			
			\item  $\frac{u_i\cdot v_i}{\|u_i\|}< D\ \ , \ \ \forall i=1,2$ ,
		\end{enumerate}
		is zero. In order to obtain a contradiction, denote
		\begin{equation}
		u_1\equiv\frac{\textbf{e}_1}{\sqrt{2}}+\frac{\textbf{e}_2}{\sqrt{2}}\ \ , \ \ u_2\equiv\frac{\textbf{e}_1}{\sqrt{2}}-\frac{\textbf{e}_2}{\sqrt{2}}
		\end{equation} 
		and $v_1\equiv v_2\equiv\delta D\textbf{e}_1$ for some $1<\delta<\sqrt{2}$ where $\textbf{e}_i$ is the $i$'th ($i=1,2$) element in the standard basis of $\mathbb{R}^d$. In particular observe that for every $i=1,2$, 
		
		\begin{equation}
		\frac{u_i\cdot v_i}{\|u_i\|}=\frac{\delta D}{\sqrt{2}}<D\,.
		\end{equation}
		In addition, define 
		\begin{equation}
		u\equiv\frac{u_1+u_2}{2}=\frac{\textbf{e}_1}{\sqrt{2}}\ \ , \ \ v\equiv\frac{v_1+v_2}{2}=\delta D\textbf{e}_1
		\end{equation}
		and notice that 
		\begin{equation}
		\frac{u\cdot v}{\|u\|}=\frac{\delta D/\sqrt{2}}{1/\sqrt{2}}=\delta D>D\,.
		\end{equation}
		Thus, a continuity argument yields that there exists $\delta>0$ such that 
		\begin{equation}
		\frac{\left(\frac{\tilde{u}_1+\tilde{u}_2}{2}\right)\cdot \left(\frac{\tilde{v}_1+\tilde{v}_2}{2}\right)}{\|\frac{\tilde{u}_1+\tilde{u}_2}{2}\|}>D\ \ , \ \ \frac{\tilde{u}_1\cdot\tilde{v}_1}{\|\tilde{u}_1\|}< D\ \ ,\ \  \frac{\tilde{u}_2\cdot\tilde{v}_2}{\|\tilde{u}_2\|}< D 
		\end{equation}
	for every	
	$(\tilde{u}_i,\tilde{v}_i)\in B_\delta\left((u_i,v_i)\right)\ , \ i=1,2$ where $B_\delta(y)$ refers to an Euclidean ball with radius $\delta>0$ around $y\in\mathbb{R}^{2d}$. Thus, the result follows by Theorem \ref{thm: inadmissible}. 
	\end{proof}
	
	\begin{remark}\normalfont
		The initial effort was to prove a multi-dimensional version of Theorem \ref{prop: one-dimensionalcase}, \textit{i.e.}, when $U_i$ ($1\leq i\leq n$) has a general multi-dimensional distribution. Observe that the multi-dimensional setup implies existence of a continuum of directions. Therefore, a generalization of the proof which appears in Section \ref{sec: univariate} is not straightforward. 
	\end{remark}
	
	\section{Discussion}\label{sec:conclusion}
	From the perspective of classical decision theory, 
	an inadmissible test should not be used because there is another test which is better. Accordingly, the inadmissibility results which appear in this work are not encouraging applied statisticians to apply Moran's test for real data. The purpose of this short section is to discuss the implications of the current results more deeply along with some suggestions for further research.
	
	Primarily, knowing that Moran's test is inadmissible should be considered as an initial step toward the pursuit for a better test. For example, in the one-dimensional Gaussian case, a combination of Theorem \ref{prop: one-dimensionalcase} and the theory regarding unbiased tests  implies that a two-sided 	Z-test is better than Moran's test. However, it is not clear how to find a better test in the general case, \textit{e.g.,} when Condition \ref{cond: inadmissibility} is satisfied. This practical question remains open.
	
	Furthermore, it is possible that in certain setups, Moran's test performs quite well (at least for alternatives which are distant from the null). In such cases, there is no strong incentive for practitioners to look for a better test. In Section 2.1.2 of \cite{DiCiccio2018}, there is an effort to compare the power of Moran's test with the chi-square test in the multi-dimensional Gaussian setup. It might be good to keep on in this direction by assessing the performance of Moran's test when the data  does not have a Gaussian distribution.      
	
	Finally, it is also reasonable to consider a statistician who is willing to pay in terms of power for testing the `correct' hypothesis. It is interesting to see how to phrase a formal model which is consistent with the preferences of such a statistician. Then, the challenge will be to figure out whether Moran's test is admissible in this new framework.  Possibly, an inspiration for such a model might come from some model-selection frameworks in which the statistician is willing to give up some part of the data in favour of conducting a statistical inference on a better model. Another branch of literature which might be related regards constrained statistical inference and an analysis of type III error (for more information, see \textit{e.g.,} \cite{Silvapulle2005}).  
	
\begin{equation*}
\end{equation*}
	\textbf{Acknowledgement:} The author would like to thank Ori Davidov for interesting discussions which help in finding the topic for this work.
	\newpage


\begin{thebibliography}{99}
		
		\bibitem{Asgharian1998}
		Asgharian, M., \& Noorbaloochi, S. (1998). Note on a fundamental relationship between admissible and Bayesian decision rules. \textit{Statistics: A Journal of Theoretical and Applied Statistics}, \textbf{31}(1), 21-34.
		
		\bibitem{Ash2014}
		Ash, R. B. (2014). \textit{Real Analysis and Probability: Probability and Mathematical Statistics: a Series of Monographs and Textbooks}. Academic press.
		
		\bibitem{Birnbaum1954}
		Birnbaum, A. (1954). Combining independent tests of significance. \textit{Journal of the American Statistical Association}, \textbf{49}(267), 559-574.
		
		\bibitem{Brown1989}
		Brown, L. D., \& Marden, J. I. (1989). Complete class results for hypothesis testing problems with simple null hypotheses. The \textit{Annals of Statistics}, \textbf{17}(1), 209-235.
		
		\bibitem{Brown1992}
		Brown, L. D., \& Marden, J. I. (1992). Local admissibility and local unbiasedness in hypothesis testing problems. \textit{The Annals of Statistics}, 832-852.
		
		\bibitem{Cox1975}
		Cox, D. R. (1975). A note on data-splitting for the evaluation of significance levels. \textit{Biometrika}, \textbf{62}(2), 441-444.
		
		\bibitem{Cox1977}
		Cox, D. R., Spj\o tvoll, E., Johansen, S., van Zwet, W. R., Bithell, J. F., Barndorff-Nielsen, O., \& Keuls, M. (1977). The role of significance tests [with discussion and reply]. \textit{Scandinavian Journal of Statistics}, 49-70.
		
		\bibitem{DiCiccio2018}
		DiCiccio, C. J. (2018). \textit{Hypothesis Testing Using Multiple Data Splitting}. Stanford University.
		
		\bibitem{DiCiccio2020}
		DiCiccio, C. J., DiCiccio, T. J., \& Romano, J. P. (2020). Exact tests via multiple data splitting. \textit{Statistics and Probability Letters}, 166, 108865.
		
		\bibitem{Ferguson2014}
		Ferguson, T. S. (2014). \textit{Mathematical statistics: A decision theoretic approach} (Vol. 1). Academic press.
		
		\bibitem{Jacobovic2020}
		Jacobovic, R., \& Kella, O. (2020). Minimizing a stochastic convex function subject to stochastic constraints and some applications. \textit{Stochastic Processes and their Applications}, \textbf{130}(11), 7004-7018.
		
		\bibitem{Johnstone2019}
		Johnstone, I. M. (2019). Larry Brown's work on admissibility. Statistical Science, \textbf{34}(4), 657-668.
		
		\bibitem{Kozek}
		Kozek, A. (1982). Towards a calculus for admissibility. \textit{The Annals of Statistics}, 825-837.
		
		\bibitem{Lehmann2006}
		Lehmann, E. L., \& Romano, J. P. (2006). \textit{Testing statistical hypotheses}. Springer Science \& Business Media. 	
		
		
		\bibitem{Marden1982}
		Marden, J. I. (1982). Minimal complete classes of tests of hypotheses with multivariate one-sided alternatives. \textit{The Annals of Statistics}, 962-970.
		
		\bibitem{Moran1973}
		Moran, P. A. (1973). Dividing a sample into two parts a statistical dilemma. \textit{Sankhya: The Indian Journal of Statistics}, Series \textbf{A}, 329-333.
		
		\bibitem{Romanov2019}
		Romano, J. P., \& DiCiccio, C. (2019). \textit{Multiple data splitting for testing}. Department of Statistics, Stanford University.
		
		\bibitem{Rudin}
		Rudin. W. (1987). \textit{Real and complex analysis}. McGraw-Hill. 
		
		
		\bibitem{Rukhin1995}
		Rukhin, A. L. (1995). Admissibility: Survey of a concept in progress. \textit{International Statistical Review/Revue Internationale de Statistique}, 95-115.
		
		\bibitem{Shafer2020}
		Shafer, G. (2020). Testing by betting: A strategy for statistical and scientific communication. \textit{To be read before the Royal Statistical Society on September, 9.}
		
		\bibitem{Silvapulle2005}
		Silvapulle, M. J., \& Sen, P. K. (2005). \textit{Constrained statistical inference: Inequality, order and shape restrictions.} John Wiley \& Sons.
		
		\bibitem{Vovk2019}
		Vovk, V., \& Wang, R. (2019). Combining e-values and p-values. \textit{arXiv preprint arXiv:1912.06116}, 3.
		
		\bibitem{Vovk2020}
		Vovk, V. (2020). A note on data splitting with e-values.
		
	\end{thebibliography}
\end{document}